\newtheorem{theorem}{Theorem}[section]
\newtheorem{corollary}[theorem]{Corollary}
\newtheorem{proposition}[theorem]{Proposition}
\theoremstyle{definition}
\newtheorem{definition}[theorem]{Definition}
\newtheorem{example}[theorem]{Example}
\newtheorem{question}[theorem]{Question}
\def\G{{\mathbb G}}
\def\P{{\mathbb P}}
\def\R{{\mathbb R}}
\def\Z{{\mathbb Z}}
\def\cE{{\mathcal E}}
\def\cL{{\mathcal L}}
\def\cM{{\mathcal M}}
\def\cN{{\mathcal{N}}}
\def\cO{{\mathcal{O}}}
\def\G{{\mathbb{G}}}
\def\cOperatorname#1{\mathop{\rm #1}\nolimits}
\def\deg{\cOperatorname{deg}}
\def\det{\cOperatorname{det}}
\def\NE{{\cOperatorname{NE}}}
\def\ME{{\cOperatorname{ME}}}
\newcommand{\cME}[1]{\cOverline{\ME}}
\begin{document}
%\pagewiselinenumbers

\title{Fano $4$-folds with nef tangent bundle in positive characteristic}

\author{Yuta Takahashi}
\author{Kiwamu Watanabe}
\date{\today}
\address{Department of Mathematics, Faculty of Science and Engineering, Chuo University.
1-13-27 Kasuga, Bunkyo-ku, Tokyo 112-8551, Japan}
\email{watanabe@math.chuo-u.ac.jp}
\email{yuta0630takahashi0302@gmail.com}
\thanks{The author is partially supported by JSPS KAKENHI Grant Number 21K03170 and the Sumitomo Foundation Grant Number 190170.}

\subjclass[2010]{14J35, 14J45, 14M17, 14E30.}
\keywords{}

\begin{abstract} In characteristic $0$, the Campana-Peternell conjecture claims that the only smooth Fano variety with nef tangent bundle should be homogeneous. In this paper, we study the positive characteristic version of the Campana-Peternell conjecture. In particular, we give an affirmative answer for Fano $4$-folds with nef tangent bundle and Picard number greater than one.  
\end{abstract}

\maketitle

\section{Introduction} How can one compare two given smooth projective varieties? Since any smooth variety $X$ has the tangent bundle $T_X$, we often use the tangent bundle $T_X$ to compare smooth varieties. In particular, the positivity of the tangent bundle imposes strong restrictions on the geometry of varieties. For instance, in the celebrated paper \cite{Mori79}, Mori solved the famous Hartshorne conjecture. The Hartshorne conjecture states that a smooth projective variety $X$ defined over an algebraically closed field is the projective space if $T_X$ is ample. As a generalization of the Hartshorne conjecture, Campana and Peternell studied complex smooth projective varieties with nef tangent bundle \cite{CP91}. In this direction, Demailly-Peternell-Schneider \cite{DPS94} proved that any complex smooth projective variety with nef tangent bundle is, up to an \'etale cover,  a Fano fiber space over an Abelian variety. As a consequence, the study of complex smooth projective varieties with nef tangent bundle can be reduced to that of Fano varieties. Moreover Campana and Peternell \cite{CP91} conjectured that any complex smooth Fano variety with nef tangent bundle is homogeneous. This conjecture holds for varieties of dimension at most five, but in general this is widely open. We refer the reader to \cite{MOSWW}.  

In \cite{KW21}, the second author and Kanemitsu proved an analogue of the theorem by Demailly-Peternell-Schneider in positive characteristic; thus the next step is to study smooth Fano varieties with nef tangent bundle in positive characteristic. When the dimension is at most three, this problem was studied by the second author \cite{Wat17}. In the present paper, we give a classification of Fano $4$-folds with nef tangent bundle and Picard number greater than one:

\begin{theorem}\label{them:main} Let $X$ be a smooth Fano $4$-fold defined over an algebraically closed field. If the tangent bundle $T_X$ is nef and the Picard number of $X$ is greater than one, then $X$ is isomorphic to one of the following:
\begin{enumerate}
\item $\P^3\times \P^1$;
\item $Q^3\times \P^1$;
\item $\P^2\times \P^2$; 
\item $\P^2\times \P^1\times \P^1$;
\item $\P(T_{\P^2})\times \P^1$, where $T_{\P^2}$ is the tangent bundle of $\P^2$;
%\item $\P(p^{\ast}T_{\P^2})$ with $p: \P(T_{\P^2})\to \P^2$ is the projection;
\item $\P^1\times \P^1\times\P^1\times \P^1$; 
\item $\P(\cN)$ with a null-correlation bundle $\cN$ on $\P^3$ (see Definition~\ref{def:null}).
\end{enumerate}
In particular, $X$ is a homogeneous variety with reduced stabilizer. 
\end{theorem}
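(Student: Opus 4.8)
The plan is to classify $X$ by analyzing its elementary Mori contractions and inducting on dimension. The essential structural input, available in positive characteristic from \cite{KW21}, is that for a smooth Fano variety with nef tangent bundle every elementary contraction $\phi\colon X\to Y$ is a smooth morphism whose target $Y$ is again a smooth Fano variety with nef tangent bundle and whose fibers $F$ are smooth Fano varieties with nef tangent bundle. Since $\rho(X)>1$, such a $\phi$ exists with $1\le\dim Y\le 3$, and then both $Y$ and $F$ have dimension at most three, so they fall under the low-dimensional classification: $\P^1$ in dimension one; $\P^2$ and $\P^1\times\P^1$ in dimension two; and $\P^3,\,Q^3,\,\P^2\times\P^1,\,(\P^1)^3,\,\P(T_{\P^2})$ in dimension three (this list combines the Picard-number-one cases with the results of \cite{Wat17}). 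The whole proof then reduces to deciding which fibrations assembled from these pieces can carry a nef tangent bundle.

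I would organize the argument by the dimension of the general fiber $F$ of a chosen elementary contraction. When $\dim F=3$ (so $Y=\P^1$) the map is a $\P^3$- or $Q^3$-bundle over $\P^1$; when $\dim F=2$ it is a $\P^2$- or $(\P^1\times\P^1)$-bundle over a del Pezzo surface with nef tangent bundle; and when $\dim F=1$ it is a $\P^1$-bundle over one of the five Fano threefolds above. In each case the tool for pinning down the bundle is the relative tangent sequence $0\to T_{X/Y}\to T_X\to\phi^{*}T_Y\to 0$ together with nefness of $T_X$: restricting to rational curves in the fibers and along sections controls the normalized bundle $E$ defining $\P(E)$, typically forcing it to be a twist of a trivial, a tangent, or a null-correlation bundle. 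The products in the list arise from a splitting criterion: when $X$ admits two contractions whose relative tangent bundles give a direct-sum decomposition of $T_X$, the induced map to the product of the targets is an isomorphism, so $X$ splits as a product of lower-dimensional members of the classification.

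The genuinely non-split outcomes are exactly the two interesting items: over $\P^3$ a non-trivial nef $\P^1$-bundle must be $\P(\cN)$ for a null-correlation bundle, while over $\P^2\times\P^1$ (or over $\P^2$) the projectivized tangent bundle produces the factor $\P(T_{\P^2})$. Checking that no other twists survive, and that the genuine $\P^3$- and $Q^3$-bundles or quadric-surface bundles that are not products cannot have nef tangent bundle, is the bulk of the case analysis.

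The main obstacle is positive characteristic. First, the smoothness of contractions, which is the backbone of the induction, can fail in characteristic $p$ without the nef-tangent hypothesis, and separability together with the failure of generic smoothness forces one to invoke the structure results of \cite{KW21} rather than reprove them by naive characteristic-zero methods. Second, the final assertion that $X$ is homogeneous with \emph{reduced} stabilizer is not automatic in small characteristic, where quadrics and flag-type varieties may acquire non-reduced automorphism or stabilizer schemes. I would conclude by exhibiting each variety in the list as a homogeneous space with reduced isotropy: $\P^n=\mathrm{PGL}_{n+1}/P$ and $Q^n$ as orthogonal quotients, $\P(T_{\P^2})$ as the full flag variety $\mathrm{SL}_3/B$, $\P(\cN)$ as the symplectic flag variety of type $C_2$ (i.e.\ $\mathrm{Sp}_4/B$), and products of these, verifying reducedness of the stabilizer directly, with special attention to characteristics $2$ and $3$ where the quadric and symplectic cases require explicit checking.
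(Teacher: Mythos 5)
Your overall strategy matches the paper's: use the structure theorems of \cite{KW21} to get smooth elementary contractions with Fano fibers and targets, reduce to the classification in dimension $\le 3$, and then classify the resulting bundle structures. However, the proposal defers precisely the steps where the real work lies, and the tool you name for those steps is not sufficient. You write that restricting $T_X$ to rational curves via the relative tangent sequence ``typically'' forces the normalized bundle $E$ to be trivial, a tangent bundle, or a null-correlation bundle. This does not dispose of the hard cases. For a $\P^1$-bundle over $\P^3$, the classification of rank-two Fano bundles leaves open two stable bundles with $(c_1,c_2)=(0,3)$ and $(-1,4)$; these are genuinely Fano bundles, so no positivity of $-K_X$ or naive nefness restriction to fiber curves rules them out. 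The paper excludes them by exploiting the \emph{second} elementary contraction $f_2\colon X\to X_2$ (guaranteed since $\NE(X)$ is simplicial and $\rho_X\ge 2$): writing $f_2^{*}H_2=a\xi+bf_1^{*}H_1$ and computing $(f_2^{*}H_2)^4=0$ in the Chow ring yields Diophantine contradictions (e.g.\ $-3a^2+b^2=0$, or a negative intersection number $-232$ against the nef class $2\xi+f_1^{*}H_1$). Similarly, excluding a second $\P^1$-bundle structure over $Q^3$ (needed so that the only non-product outcome over a three-dimensional base is $\P(\cN)$) is an explicit intersection computation ending in $2=1$. None of this is visible in your outline, and in positive characteristic one cannot fall back on the characteristic-zero arguments of Szurek--Wi\'sniewski, which is the whole point of the paper. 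Note also that you must first justify that these $\P^n$-bundles are projectivizations of vector bundles at all; the paper does this via vanishing of the Brauer group of a rational base.

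A second, structural gap: your splitting criterion (a direct-sum decomposition $T_X\cong T_{X/Y}\oplus \phi^{*}T_Y$ implies $X\cong F\times Y$) is itself a nontrivial claim that you would need to establish, and it is not how the products are obtained here. The paper instead builds, for any elementary contraction $f\colon X\to Y$ with $\dim Y>0$, a minimal birational section over a minimal rational curve in $Y$ (via de Jong--Starr), shows the resulting family is unsplit and spans an extremal ray, and deduces (Proposition~\ref{prop:dim1}, Corollary~\ref{cor:FT}) that a contraction onto $\P^1$, or more generally onto an FT-manifold, forces $X$ to split as a product. This is what handles all of cases (i), (ii), (iv), (v), (vi) at once and reduces the problem to bases $\P^2$, $\P^3$, $Q^3$. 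Finally, a small but real slip: fibers of \emph{elementary} contractions have Picard number one (Theorem~\ref{them:KW21:propert}(iv)), so $(\P^1\times\P^1)$-bundle structures do not arise in the case analysis; including them suggests the case division has not been thought through against the actual constraints supplied by \cite{KW21}.
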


In characteristic $0$, this was proved by Camapana and Peternell \cite{CP93}. However there are some difficulties to study 
this kind of classification problem in positive characteristic. For instance, the proof of \cite{CP93} heavily depends on the Kodaira vanishing theorem and Hodge theory, which unfortunately fail in positive characteristic. We shall give a characteristic-free proof of \cite{CP93}. 

The contents of this paper are organized as follows. In Section 2, we recall the background of our problem. We also review some known properties of Fano varieties with nef tangent bundle, paying special attention to some results in \cite{KW21}. In Section 3, we shall study Fano varieties with nef tangent bundle which admit a projective bundle structure; this study plays a crucial role in the proof of Theorem~\ref{them:main}. In Section 4, we will give a proof of Theorem~\ref{them:main}.

\section{Preliminaries} 

\subsection*{Notations} Let $k$ be an algebraically closed field of characteristic $p\geq 0$. Throughout this paper, we work over $k$ and use standard notations as in \cite{Har, Kb, KM, L1, L2}. For a smooth projective variety $X$, we also use the following notations:
\begin{itemize}
\item We denote by $T_X$  the tangent bundle of $X$.
\item We denote by $A_k(X)=A^{n-k}(X)$ the group of rational equivalence classes of algebraic $k$-cycles on $X$. We denote by  $A(X):=\bigoplus_k A_k(X)$ the Chow ring of $X$.
\item We denote by $N_1(X)$ the group of numerical equivalence classes of algebraic $1$-cycles with real coefficients on $X$. The dimension $\dim_{\R} N_1(X)$ as an $\R$-vector space is called {\it the Picard number of $X$} and we denote it by $\rho_X$.
\item We say that a smooth projective variety $X$ is {\it Fano} if $-K_X$ is ample. For a smooth Fano variety $X$, the {\it pseudoindex} $\iota_X$ of $X$ is the minimal anticanonical degree of rational curves on $X$.   
\item An {\it $F$-bundle} is a smooth morphism $f:Y \to X$ between smooth projective varieties whose fibers are isomorphic to $F$.
\item An {\it elementary contraction} means a contraction of an extremal ray.
\item For a vector bundle $\cE$ (resp. $\cE_i$) on $X$, we denote the tautological divisor of $\P(\cE)$ (resp. $\P(\cE_i)$) by ${\xi}_{\cE}$ (resp. ${\xi}_{\cE_i}$). When no confusion is likely, we also simply denote the divisor ${\xi}_{\cE}$ (resp. ${\xi}_{\cE_i}$) by $\xi$ (resp. ${\xi}_{i}$).
\item For a rank two vector bundle $\cE$ on $X$, we denote the $i$-th Chern class of $\cE$ by $c_i(\cE)$. When $A^1(X)$ and $ A^2(X)$ are isomorphic to $\Z$, there exist an effective divisor $H$ and an effective $2$-cocycle $L$ on $X$ such that $A^1(X)\cong \Z[H]$ and $A^2(X)\cong \Z[L]$; then we consider $c_1(\cE)$ and $c_2(\cE)$ as integers $c_1$ and $c_2$, that is, $c_1(\cE)=c_1H\in A^1(X)$ and $c_2(\cE)=c_2L\in A^2(X)$. In this setting, we say that $\cE$ is {\it normalized} if $c_1=0$ or $-1$. We also say that $\cE$ is {\it stable} (resp. {\it semistable}) if for every invertible subsheaf $\cL$ of $\cE$, $c_1(\cL)<\dfrac{1}{2} c_1(\cE)$ (resp. $c_1(\cL)\leq \dfrac{1}{2} c_1(\cE)$). 
\item For a vector bundle $\cE$ on $X$, we say that $\cE$ is {\it Fano} if $\P(\cE)$ is a Fano variety.
\item For a vector bundle $\cE$ on $X$, we say that $\cE$ is {\it numerically flat} if $\cE$ and its dual $\cE^{\vee}$ are nef (equivalently $\cE$ and $\det(\cE^{\vee})$ are nef). 
\item For a projective variety $X$, we denote by ${\rm RatCurves}^n(X)$ the {\it family of rational curves} on $X$ (see \cite[II~Definition~2.11]{Kb}).
\item We denote by $\P^n$ and $Q^n$ projective $n$-space and a smooth quadric hypersurface in $\P^{n+1}$ respectively.
\end{itemize}

\subsection{Background of the Problem}

Let $X$ be a smooth projective variety with nef tangent bundle. By the decomposition theorem \cite[Theorem~1.7]{KW21}, $X$ admits a smooth contraction $\varphi: X\to M$ such that 
\begin{itemize}
\item any fiber of $\varphi$ is a  smooth Fano variety with nef tangent bundle;
\item the tangent bundle $T_M$ is numerically flat.
\end{itemize}

This result suggests to study two special cases:

\begin{question}[{\cite[Question 1.8]{KW21}, \cite[Conjecture~11.1]{CP91}, \cite[Question 1]{Wat17}}]\label{ques:str}
Let $X$ be a smooth projective variety with nef tangent bundle.
\begin{enumerate}
 \item If $X$ is a Fano variety, then is $X$ a homogeneous space with reduced stabilizer?
 \item If $T_X$ is numerically flat, then is $X$ an \'etale quotient of an Abelian variety? 
\end{enumerate}
\end{question}

In characteristic zero, for special varieties, including Fano varieties whose dimension is at most five, affirmative answers to the first question are known (see \cite{CP91,CP93,Hwa06,Kan16,Kan17,Kan19, Mok02,MOSW15,SW04,Wat14,Wat15,Wat20}), and an affirmative answer to the second question also follows from the Beauville-Bogomolov decomposition. On the other hand, very little is known in positive characteristic; we refer the reader to \cite{Jos21,Lan15, MS87,Wat17}. Here we only recall the following:

\begin{theorem}[{\cite{CP91, Wat17}}]\label{them:3-fold} 
Let $X$ be a smooth Fano $n$-fold with nef tangent bundle. If $n$ is at most three, then $X$ is one of the following:
\begin{enumerate}
\item $X$ is the $n$-dimensional projective space $\P^n$;
\item $X$ is an $n$-dimensional hyperquadric $Q^n$ $(n=2, 3)$;
\item $X=\P^2 \times \P^1$;
\item $X=\P^1 \times \P^1 \times \P^1$;
\item $X=\P(T_{\P^2})$. 
\end{enumerate}
\end{theorem}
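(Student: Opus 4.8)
The plan is to induct on $n=\dim X$, disposing of $n=1,2$ directly and reducing $n=3$ to the analysis of a single extremal contraction. For $n=1$ the only smooth Fano curve is $\P^1$. For $n=2$, $X$ is a del Pezzo surface, and it cannot contain a $(-1)$-curve: if $E\cong\P^1$ were one, then $N_{E/X}=\cO_{\P^1}(-1)$ and the normal sequence $0\to T_E\to T_X|_E\to N_{E/X}\to 0$ gives $T_X|_E\cong\cO_{\P^1}(2)\oplus\cO_{\P^1}(-1)$, which is not nef, contradicting nefness of $T_X$. Hence $X$ is a minimal del Pezzo surface, i.e. $\P^2$ or $\P^1\times\P^1=Q^2$.

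For $n=3$ I would separate the cases $\rho_X\geq 2$ and $\rho_X=1$. Assume first $\rho_X\geq 2$ and fix an elementary contraction $\pi\colon X\to Y$. The structural input, furnished by the theory of contractions of Fano varieties with nef tangent bundle developed in \cite{KW21}, is that $\pi$ is a smooth morphism whose base $Y$ and fibres $F$ are again smooth Fano varieties with nef tangent bundle. Since $\rho_X\geq2$ rules out $\dim Y\in\{0,3\}$, we have $(\dim Y,\dim F)\in\{(2,1),(1,2)\}$, and by the cases already settled both $Y$ and $F$ belong to $\{\P^1,\P^2,Q^2\}$. Thus $X$ is a $\P^1$-bundle over $\P^2$ or over $\P^1\times\P^1$, or a $\P^2$- or $(\P^1\times\P^1)$-bundle over $\P^1$.

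It then remains to decide which of these projective bundles carry a nef tangent bundle; computing the relative Euler sequence and testing nefness on fibres and on minimal sections isolates $\P^2\times\P^1$, $\P^1\times\P^1\times\P^1$ and $\P(T_{\P^2})$, yielding cases (3)--(5). Now assume $\rho_X=1$. Taking a minimal rational curve $C\cong\P^1$, nefness of $T_X$ forces $T_X|_C\cong\cO_{\P^1}(2)\oplus\cO_{\P^1}(a_1)\oplus\cO_{\P^1}(a_2)$ with $a_i\geq0$, so $-K_X\cdot C=2+a_1+a_2$. The key point is that nef tangent bundle together with $\rho_X=1$ forces the pseudoindex $\iota_X$ to be at least $3$; once this is known, the characterization of $\P^n$ and $Q^n$ by pseudoindex (the Kobayashi--Ochiai / Cho--Miyaoka--Shepherd-Barron type statement) gives $X\cong\P^3$ when $\iota_X=4$ and $X\cong Q^3$ when $\iota_X=3$, namely cases (1) and (2).

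The hard part will be the bound $\iota_X\geq3$ in the $\rho_X=1$ case. In characteristic zero this follows from arguments resting on Kodaira vanishing, Hodge theory and the structure of the variety of minimal rational tangents, tools that are unavailable in positive characteristic; one must instead control the splitting type of $T_X$ along minimal rational curves and the geometry of the associated rational curve families by characteristic-free means, which is precisely the delicate content of \cite{Wat17}. A secondary but nontrivial difficulty, in the $\rho_X\geq2$ case, is the explicit determination of which Fano projective bundles have nef tangent bundle, since this requires genuine positivity computations rather than a formal reduction.
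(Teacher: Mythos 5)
First, note that the paper does not prove this statement at all: it is quoted as a known result from \cite{CP91} (characteristic $0$) and \cite{Wat17} (positive characteristic), so there is no internal proof to compare yours against. Judged on its own terms, your outline reproduces the correct skeleton of the argument in those references: $n=1$ is trivial; the $(-1)$-curve exclusion for $n=2$ is a complete, characteristic-free argument (a quotient of a nef bundle is nef, and $N_{E/X}\cong\cO_{\P^1}(-1)$ is not), and the classification of minimal del Pezzo surfaces holds in all characteristics; and the dichotomy $\rho_X\geq 2$ versus $\rho_X=1$, with the first case reduced to projective bundles via the contraction theorem (Theorem~\ref{them:KW21:propert}), is the right architecture.

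As a proof, however, the proposal has genuine gaps, and you flag the main ones yourself. (1) In the $\rho_X=1$ case the entire mathematical content is the pseudoindex bound $\iota_X\geq 3$ together with a characteristic-free Kobayashi--Ochiai / Cho--Miyaoka--Shepherd-Barron type characterization; neither is supplied, and the latter is itself delicate in characteristic $p$ (the usual statements need separable rational connectedness, which must be imported from Theorem~\ref{them:KW21:RC}, and the splitting $T_X|_C\cong\cO(2)\oplus\cO(a_1)\oplus\cO(a_2)$ presupposes that the minimal rational curve is unramified, which is not automatic in positive characteristic). Deferring this to ``characteristic-free means'' names the difficulty rather than resolving it. (2) In the $\rho_X\geq 2$ case the determination of which Fano bundles over $\P^2$ and $\P^1$ have nef tangent bundle is asserted rather than carried out; this is a genuine classification problem for rank-two Fano bundles in arbitrary characteristic, parallel to what Section~3 of the paper must do over $\P^3$ and $Q^3$. (3) A small but real error: by Theorem~\ref{them:KW21:propert}(iv) every fiber of an elementary contraction has Picard number one, so $Q^2=\P^1\times\P^1$ cannot occur as a fiber and the ``$(\P^1\times\P^1)$-bundle over $\P^1$'' case does not arise; listing it indicates the fiber analysis was not pushed through. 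In short: right roadmap, but the two load-bearing steps are placeholders, which is precisely why the paper cites \cite{Wat17} instead of proving the statement.
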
 

To give a classification of complex Fano varieties with Picard number greater than one, it is quite common to study extremal contractions, but in positive characteristic, the existence of a contraction of an extremal ray is not known in general. The following result states that there exists a contraction of an extremal ray for Fano varieties with nef tangent bundle:

\begin{theorem}[{a special case of \cite[Theorem~1.5]{KW21}}]\label{them:KW21:propert} Let $X$ be a smooth Fano variety $X$ with nef tangent bundle. Let $R \subset \NE(X)$ be an extremal ray. Then the contraction $f: X\to Y$ of the ray $R$ exists and the following hold:
\begin{enumerate}
\item $f$ is smooth;
\item any fiber $F$ of $f$ is again a smooth Fano variety with nef tangent bundle;
\item $Y$ is also a smooth Fano variety with nef tangent bundle;
\item $\rho_X=\rho_Y+1$ and $\rho_ F=1$.
\end{enumerate} 
\end{theorem}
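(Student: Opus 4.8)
The plan is to build the contraction directly from the geometry of rational curves, since in positive characteristic one cannot invoke the base-point-free theorem (Kawamata--Viehweg vanishing being unavailable). First I would choose a rational curve $\ell$ whose class spans $R$ and whose anticanonical degree $-K_X\cdot\ell$ is minimal among curves with class in $R$, and let $\mathcal M$ be an irreducible component of $\mathrm{RatCurves}^n(X)$ containing $[\ell]$. Because $R$ is extremal and $\ell$ has minimal degree, $\mathcal M$ is unsplit: any degeneration of $\ell$ breaks into effective $1$-cycles whose classes, by extremality of $R$, all lie in $R$ and hence have positive anticanonical degree strictly smaller than $-K_X\cdot\ell$, contradicting minimality.

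Next I would use the hypothesis that $T_X$ is nef to control this family. For any member of $\mathcal M$ with normalization $g\colon\P^1\to X$ we have $g^*T_X=\bigoplus_i\mathcal O(a_i)$ with every $a_i\ge 0$, and minimality of the degree forces the splitting type $\mathcal O(2)\oplus\mathcal O(1)^{\oplus(d-2)}\oplus\mathcal O^{\oplus(n-d+1)}$ with $d=-K_X\cdot\ell$. In particular $h^1(\P^1,g^*T_X)=0$, so every member is free and standard, $\mathcal M$ is smooth of the expected dimension, and the universal family $p\colon\mathcal U\to\mathcal M$ with evaluation $\mathrm{ev}\colon\mathcal U\to X$ is a smooth morphism.

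With the unsplit, smoothly-evaluating family in hand, I would construct the contraction as the quotient by $\mathcal M$-equivalence, where two points are equivalent if joined by a connected chain of curves in $\mathcal M$. Kollár's theory of the rationally chain connected quotient is characteristic-free and produces, on an open locus, a proper morphism whose fibers are these equivalence classes. Here is the crux and the main obstacle: one must promote this rational quotient to an everywhere-defined morphism $f\colon X\to Y$ onto a smooth projective $Y$ and prove that it is smooth and contracts exactly the classes in $R$ --- this is precisely where the Mori-theoretic contraction theorem is replaced by an argument using nefness of $T_X$. The point is that nefness of $T_X$ forbids jumping or singular fibers: the smoothness of $\mathrm{ev}$ together with the standardness of the curves forces the chain locus to be closed and equidimensional, so the quotient extends to a smooth contraction of $R$. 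Establishing this rigorously --- that $Y$ is projective and $f$ is smooth, not merely that an open rational quotient exists --- is the hard technical heart of the statement.

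Once $f\colon X\to Y$ is known to be a smooth contraction of $R$ with connected fibers, the remaining assertions follow formally. Since $f$ contracts the single ray $R$, the relative Picard number is one, giving $\rho_X=\rho_Y+1$ and $\rho_F=1$ for every fiber $F$. From the relative tangent sequence $0\to T_{X/Y}\to T_X\to f^*T_Y\to 0$ the bundle $f^*T_Y$ is a quotient of the nef bundle $T_X$, hence nef, and nefness descends along the smooth surjection $f$, so $T_Y$ is nef. For a fiber $F$ one has $T_F=T_{X/Y}|_F$ and $N_{F/X}\cong\mathcal O_F^{\oplus\dim Y}$; given any $\phi\colon C\to F$ from a smooth curve and a quotient line bundle $\phi^*T_F\twoheadrightarrow L$ with kernel $K$, the bundle $\phi^*T_X/K$ is a quotient of the nef bundle $\phi^*T_X$ and fits in $0\to L\to\phi^*T_X/K\to\mathcal O_C^{\oplus\dim Y}\to 0$, whence $\deg L=\deg(\phi^*T_X/K)\ge 0$; thus $T_F$ is nef. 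Restricting $K_X=K_{X/Y}+f^*K_Y$ to $F$ gives $-K_F=-K_X|_F$, which is ample, so $F$ is Fano. Finally $Y$ is rationally connected as the image of the Fano variety $X$, and has $T_Y$ nef; applying the decomposition theorem \cite[Theorem~1.7]{KW21} to $Y$ yields a smooth fibration over a variety with numerically flat tangent bundle, whose base has numerically trivial canonical class and so must be a point, forcing $Y$ itself to be Fano.
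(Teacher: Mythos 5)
First, note that the paper does not prove this statement at all: it is imported verbatim as a special case of \cite[Theorem~1.5]{KW21}, and the machinery behind it (unsplit families of minimal rational curves and smooth geometric quotients in the sense of \cite{BCD}) is only recalled in Section~2.2. So the only meaningful comparison is with that machinery, and your outline does follow the same architecture. But there is a genuine gap, and you name it yourself: the passage from the unsplit family $\cM$ with smooth evaluation to an \emph{everywhere-defined, proper, smooth} morphism $f\colon X\to Y$ onto a smooth projective $Y$ contracting exactly $R$. The sentence ``the smoothness of $\mathrm{ev}$ together with the standardness of the curves forces the chain locus to be closed and equidimensional, so the quotient extends to a smooth contraction'' is an assertion, not an argument; Koll\'ar's rc-quotient is a priori only a proper map over an open subset, and closedness plus equidimensionality of the $\cM$-equivalence classes is precisely the content of the theorem being proved. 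This is where \cite{KW21} does real work, and your proposal leaves it unproved. A secondary unjustified claim feeds into this: minimality of $-K_X\cdot\ell$ does not by itself force every member to have splitting type $\cO(2)\oplus\cO(1)^{\oplus(d-2)}\oplus\cO^{\oplus(n-d+1)}$ (nefness only gives $a_i\ge 0$ for all summands, which already yields $h^1=0$ and freeness --- that part is fine, but the ``standardness'' you later lean on is not established).

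Two of the ``formal'' consequences also need repair. (a) $\rho_F=1$ does not follow formally from the relative Picard number being one: the map $N_1(F)\to N_1(X)$ lands in $R$ but may have a kernel. The standard fix is that $F$ is a single $\cM$-equivalence class for the unsplit family $\cM$, so $N_1(F)$ is spanned by the class of $\cM$ (Koll\'ar IV.3.13-type statement); you should say this. (b) Your derivation that $Y$ is Fano is both circular and incorrect as stated: invoking \cite[Theorem~1.7]{KW21} risks circularity within that paper, and ``the base has numerically trivial canonical class and so must be a point'' is false (a K3 surface or abelian variety is a counterexample). The correct, short route --- available in this paper as Theorem~\ref{them:KW21:RC} --- is that $Y$ is rationally chain connected as the image of the Fano variety $X$ and has nef tangent bundle, hence is Fano; alternatively, a positive-dimensional variety with numerically flat tangent bundle contains no rational curves, so it cannot be rationally chain connected. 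The rest of your argument (nefness of $T_Y$ by descent along the surjection, nefness of $T_F$ from the triviality of $N_{F/X}$, ampleness of $-K_F$ by adjunction) is correct.
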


Let $X$ be a smooth projective variety. We say that $X$ is {\it rationally chain connected} (resp. {\it rationally connected}) if two general points on $X$ can be connected by a connected chain of rational curves (resp. by a single rational curve); it follows from \cite{Cam92}, \cite[Theorem~3.3]{KMM92} that smooth Fano varieties are rationally chain connected (see also \cite[Chapter V.~Theorem 2.13]{Kb}). We say that $X$ is {\it separably rationally connected} if there exists a rational curve $f: \P^1 \to X$ such that $f^{\ast}T_X$ is ample. 
In general, if $X$ is separably rationally connected, then it is rationally connected; by definition, a rationally connected variety is rationally chain connected; moreover these notions coincide in characteristic zero, whereas there exists a rationally connected variety which is not separably rationally connected in characteristic $p>0$ (see for instance \cite[V.~Exercise~5.19]{Kb}). For varieties with nef tangent bundle, these notions coincide:
\begin{theorem}[{\cite[Theorem~1.3,~Theorem~1.6]{KW21}}]\label{them:KW21:RC} For a smooth projective variety $X$ with nef tangent bundle, the following are equivalent to each other:
\begin{enumerate}
\item $X$ is separably rationally connected;
\item $X$ is rationally connected;
\item $X$ is rationally chain connected;
\item $X$ is a Fano variety.
\end{enumerate} 
Moreover, if $X$ satisfies the above equivalent conditions, the Kleiman-Mori cone $\NE(X)$ is simplicial.
\end{theorem}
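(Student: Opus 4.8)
The four conditions can be closed into the cycle $(1)\Rightarrow(2)\Rightarrow(3)\Rightarrow(4)\Rightarrow(1)$, and my plan is to dispose of the two elementary implications first and then concentrate on the two that genuinely use the nefness of $T_X$. The implications $(1)\Rightarrow(2)$ and $(2)\Rightarrow(3)$ hold for every smooth projective variety: the existence of a very free rational curve yields rational connectedness by the standard comb-smoothing technique of Koll\'ar--Miyaoka--Mori, and a rationally connected variety is visibly rationally chain connected. Before turning to the rest, I would record one general consequence of the hypothesis: since $T_X$ is nef, for any rational curve $f\colon \P^1\to X$ the bundle $f^{\ast}T_X$ is a nef, hence globally generated, bundle on $\P^1$; thus every rational curve on $X$ is free, $X$ is separably uniruled as soon as it is uniruled, and moreover $-K_X=\det T_X$ is nef.

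For $(4)\Rightarrow(1)$, suppose $X$ is Fano. Then $X$ is uniruled, so by the observation above it is covered by free rational curves; fix a minimal dominating family and a general member through a general point $x$. The key step is to show that the tangent directions at $x$ of free rational curves through $x$ span the whole space $T_{X,x}$. The span defines a saturated subsheaf of $T_X$ that is stable under deformation of the curves, and a proper such subsheaf would force a fibration structure along which $-K_X$ cannot be strictly positive, contradicting the ampleness of $-K_X$. Once the directions span, I would attach finitely many free curves through $x$ in linearly independent directions to form a comb and smooth it --- again by Koll\'ar--Miyaoka--Mori, in the separable form that is available here precisely because all the involved curves are free --- producing a single very free rational curve. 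Hence $X$ is separably rationally connected.

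For $(3)\Rightarrow(4)$, suppose $X$ is rationally chain connected. As $-K_X=\det T_X$ is already nef, by the Kleiman--Nakai criterion it suffices to exclude a nonzero class $z\in\overline{\NE}(X)$ with $-K_X\cdot z=0$. On an irreducible curve $C$ meeting the wall $\{-K_X=0\}$ the nef bundle $T_X|_C$ has degree zero and is therefore numerically flat, so the $-K_X$-trivial locus carries no free rational curve. I would then use bend-and-break, together with the contraction machinery of Theorem~\ref{them:KW21:propert}, to manufacture on such an extremal face a rational curve of positive anticanonical degree; since every rational curve on $X$ is free, this contradicts $-K_X\cdot z=0$. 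Thus $-K_X$ is strictly positive on $\overline{\NE}(X)\setminus\{0\}$ and $X$ is Fano.

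It remains to see that $\overline{\NE}(X)$ is simplicial once $X$ is Fano. By Theorem~\ref{them:KW21:propert} every elementary contraction $f\colon X\to Y$ is a smooth fibration whose fibers have Picard number one, with $\rho_X=\rho_Y+1$ and $Y$ again Fano with nef tangent bundle. Inducting on $\rho_X$, the extremal rays of $X$ are the pullbacks of the $\rho_Y$ (inductively independent) extremal rays of $Y$ together with the contracted ray, giving exactly $\rho_X$ extremal rays that span $N_1(X)$; hence the cone is simplicial. I expect the genuine obstacle to lie in $(3)\Rightarrow(4)$: upgrading the nef divisor $-K_X$ to an ample one cannot proceed through Kodaira-type vanishing in positive characteristic, so the required positivity on the boundary of the Mori cone must be produced geometrically, by bend-and-break arguments that create rational curves transverse to the numerically flat directions.
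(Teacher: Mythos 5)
First, a remark on what you are being compared against: the paper does not prove this statement at all --- it is imported verbatim from \cite{KW21} (Theorems~1.3 and 1.6 there), so there is no internal proof to match your argument to. Judged on its own merits, your outline identifies the right elementary implications ($(1)\Rightarrow(2)\Rightarrow(3)$, and the observation that nefness of $T_X$ makes every rational curve free and $-K_X$ nef), but the two substantive implications have genuine gaps. The most serious one is in $(3)\Rightarrow(4)$: you invoke ``the contraction machinery of Theorem~\ref{them:KW21:propert}'' to produce a rational curve on a $K_X$-trivial extremal face, but that theorem has \emph{Fano} as a hypothesis, which is exactly what you are trying to prove --- the argument is circular. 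Moreover, a nonzero class $z\in\overline{\NE}(X)$ with $K_X\cdot z=0$ need not be represented by any irreducible curve, so there is nothing to run bend-and-break on; killing the $K_X$-trivial part of the closed cone is precisely the hard point, and in \cite{KW21} it is handled by building the cone from the ground up via unsplit families, their smooth geometric quotients, and an induction on these quotients (which is also where the simpliciality comes from, rather than from an a posteriori induction on elementary contractions as you propose).

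The implication $(4)\Rightarrow(1)$ also rests on an unsubstantiated step: the claim that the tangent directions of free curves through a general point span $T_{X,x}$, justified by ``a proper saturated subsheaf would force a fibration along which $-K_X$ cannot be strictly positive.'' Neither half of that sentence is right as stated: in positive characteristic a distribution does not readily integrate to a fibration, and products such as $\P^1\times\P^1$ show that having a fibration is perfectly compatible with $-K_X$ ample. The standard and correct route avoids this entirely: a smooth Fano variety is rationally chain connected in any characteristic (Koll\'ar, \cite[V, Theorem~2.13]{Kb}), the connecting chains consist of free curves because every rational curve on $X$ is free, and a chain of free rational curves joining two general points can be smoothed to a very free curve (\cite[IV, Theorem~3.9]{Kb}); this gives $(4)\Rightarrow(3)\Rightarrow(1)$ with no foliation-theoretic input. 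I would encourage you to restructure the cycle as $(1)\Rightarrow(2)\Rightarrow(3)\Rightarrow(1)$ plus the two-way traffic with $(4)$, and to accept that $(3)\Rightarrow(4)$ (together with simpliciality) requires the full force of the quotient machinery of \cite{KW21} rather than a Kleiman-criterion-plus-bend-and-break shortcut.
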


We also have the following:

\begin{theorem}[{a special case of \cite[Corollary~1.4]{KW21}}]\label{cor:SRC}
For a smooth Fano variety $X$ with nef $T_X$, the following hold:
\begin{enumerate}
 \item $X$ is algebraically simply connected;
 \item $H^1(X, \cO_X) = 0$;
 \item every numerically flat vector bundle on $X$ is trivial.
\end{enumerate}
\end{theorem}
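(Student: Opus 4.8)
The plan is to obtain all three assertions from a single source, separable rational connectedness, and in fact to reduce everything to statement (3); assertions (1) and (2) will then follow formally. By Theorem~\ref{them:KW21:RC}, the hypotheses that $X$ is Fano with nef $T_X$ already force $X$ to be separably rationally connected, so I may fix very free rational curves $f\colon\P^1\to X$, i.e. with $f^{\ast}T_X$ ample, and arrange them to pass through a general point and to join general pairs of points.

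First I would prove (3). Let $\cE$ be numerically flat of rank $r$. Along any very free $f\colon\P^1\to X$ the restriction $f^{\ast}\cE$ is numerically flat on $\P^1$; writing $f^{\ast}\cE\cong\bigoplus_i\cO_{\P^1}(a_i)$, nefness of $\cE$ gives $a_i\ge 0$ while nefness of $\cE^{\vee}$ gives $a_i\le 0$, so $f^{\ast}\cE\cong\cO_{\P^1}^{\oplus r}$. Thus $\cE$ is trivialized on a family of very free curves that dominates $X$ and connects general pairs of points. To globalize this I would use the Tannakian description of numerically flat bundles as representations of the $S$-fundamental group scheme $\pi^{S}(X,x)$: since very free curves trivialize every numerically flat bundle and sweep out $X$, the group scheme $\pi^{S}(X,x)$ of a separably rationally connected variety is trivial, whence $\cE\cong\cO_X^{\oplus r}$.

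Granting (3), both remaining statements are cheap. For (1): if $g\colon Y\to X$ is a connected finite étale cover of degree $d$, then $g_{\ast}\cO_Y$ is locally free of rank $d$ and becomes trivial after pullback along $g$ (or its Galois closure), so it is numerically flat; by (3) it is $\cO_X^{\oplus d}$, and therefore $d=h^0(X,g_{\ast}\cO_Y)=h^0(Y,\cO_Y)=1$, so $X$ is algebraically simply connected. For (2): a class in $H^1(X,\cO_X)=\Ext^1(\cO_X,\cO_X)$ corresponds to an extension $0\to\cO_X\to E\to\cO_X\to 0$; since an extension of nef bundles is nef and dualizing yields an extension of the same shape, $E$ is numerically flat, hence $E\cong\cO_X^{\oplus 2}$ by (3). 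Then $H^0(E)\to H^0(\cO_X)$ is surjective, the sequence splits, and the class vanishes; as the class was arbitrary, $H^1(X,\cO_X)=0$.

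The main obstacle is the globalization step inside (3): passing from ``numerically flat on $X$ and trivial on very free curves'' to ``globally trivial'' without the transcendental input used in \cite{CP93}. In characteristic zero this is immediate, since numerically flat means Hermitian flat, hence a unitary representation of the topological fundamental group; here that route is unavailable, and I expect the real work to lie in establishing that the $S$-fundamental group scheme of a separably rationally connected variety is trivial, which is precisely the positive-characteristic replacement for Kodaira vanishing and Hodge theory.
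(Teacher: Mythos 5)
Your proposal is correct and follows essentially the same route as the source the paper cites for this statement (the paper gives no independent proof, deferring to \cite{KW21}): there, too, one passes from Fano with nef $T_X$ to separable rational connectedness via Theorem~\ref{them:KW21:RC}, invokes the triviality of the $S$-fundamental group scheme of a separably rationally connected variety to get (3), and deduces (1) and (2) formally exactly as you do. The one step you flag as ``the real work'' --- triviality of $\pi^{S}$ for separably rationally connected varieties --- is not something you need to establish: it is an existing theorem of Biswas and dos Santos (note also that numerically flat bundles are trivial on \emph{every} rational curve, not just very free ones, which is the form their criterion uses), so your argument closes once that reference is supplied.
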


\subsection{Minimal birational sections}
In this subsection, we recall minimal birational sections whose idea appeared in \cite{Wat21, KW21}. Let $X$ be a smooth Fano variety with nef tangent bundle. Assume that $f: X\to Y$ is an extremal contraction and $\dim Y>0$. Since $f$ is a composition of contractions of extremal rays, Theorem~\ref{them:KW21:propert} tells us that $Y$ and any fiber of $f$ are smooth Fano varieties with nef tangent bundle. By Theorem~\ref{them:KW21:RC}, we see that any fiber $F$ of $f$ is separably rationally connected. 
\begin{definition}\label{def:} Under the above notation, let $C\subset Y$ be a rational curve.
We call a rational curve $\tilde{C} \subset X$ a {\it birational section} of $f$ over $C$ if $f|_{\tilde{C}}: \tilde{C}\to C$ is birational. A birational section $\tilde{C} \subset X$ of $f$ over $C$ is {\it minimal} if the anticanonical degree $-K_X\cdot \tilde{C}$ is minimal among birational sections of $f$ over $C$. 
\end{definition}

Let us take a rational curve $\ell \subset Y$ such that $-K_Y\cdot \ell=\iota_Y$ and let $\P^1\to \ell \subset Y$ be the normalization of $\ell$. We consider the fiber product:
\[
  \xymatrix{
    X_{\ell} \ar[r]^i \ar[d]_{f_{\ell}} &  X\ar[d]^{f} \\
    \P^1  \ar[r]&  Y }
\]

By the theorem of de Jong and Starr \cite{DJS03}, $f_{\ell}$ admits a section $\tilde{\ell}$. Let us denote $i(\tilde{\ell})$ by $\tilde{\ell}_X$; then $f|_{\tilde{\ell}_X}:\tilde{\ell}_X \to \ell$ is birational; thus $\tilde{\ell}_X$ is a birational section of $f$ over $\ell$. This yields that there exists a minimal birational section of $f$ over $\ell$. As a consequence, we may find a rational curve $\ell_0 \subset Y$ and a minimal birational section of $f$ over $\ell_0$ satisfying the following:
\begin{itemize}
\item $-K_Y\cdot \ell_0=\iota_Y$;
\item $-K_X\cdot \tilde{\ell_0}_X= \min \left\{  \deg_{(-K_X)}\tilde{\ell}_X \mid -K_Y\cdot \ell=\iota_Y, [\ell] \in {\rm RatCurves}^n(X)  \right\}$.
\end{itemize}
Let us consider a family of rational curves $\cM\subset {\rm RatCurves}^n(X)$ containing $[\tilde{\ell_0}_X]$. By the same argument as in \cite[Proposition~4.14]{Wat21}, we see that $\cM$ is unsplit, that is, $\cM$ is proper as a scheme. Moreover \cite[Proposition~4.4]{KW21} implies that $\R_{\geq 0}[\tilde{\ell_0}_X]$ is an extremal ray and the contraction $g: X\to Z$ of the ray $\R_{\geq 0}[\tilde{\ell_0}_X]$ is a smooth geometric quotient for $\cM$ in the sense of \cite{BCD}. Summing up, we obtain the following:

\begin{proposition}\label{prop:bir:sec:contr} Let $X$ be a smooth Fano variety with nef tangent bundle. Assume that $f: X\to Y$ is an extremal contraction and $\dim Y>0$. Then there exists an unsplit covering family of rational curves $\cM \subset {\rm RatCurves}^n(X)$ such that 
\begin{itemize}
\item for any $[\tilde{\ell}_X]\in \cM$, $f|_{\tilde{\ell}_X}:\tilde{\ell}_X \to f(\tilde{\ell}_X)$ is birational and $-K_Y\cdot f(\tilde{\ell}_X)=\iota_Y$;
\item $\R_{\geq 0}[\tilde{\ell_0}_X]$ is an extremal ray and the contraction $g: X\to Z$ of the ray $\R_{\geq 0}[\tilde{\ell_0}_X]$ is a smooth geometric quotient for $\cM$.
\end{itemize}
\end{proposition}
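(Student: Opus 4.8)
The plan is to produce the family $\cM$ by constructing one explicit minimal birational section, checking that its numerical class spans an extremal ray, and then transporting the structural results of \cite{KW21} to obtain the quotient; the covering and unsplit properties will both come from a minimality argument in the spirit of \cite{Wat21}. The first thing I would record is that every fiber $F$ of $f$ is a smooth Fano variety with nef tangent bundle by Theorem~\ref{them:KW21:propert}, hence separably rationally connected by Theorem~\ref{them:KW21:RC}. This is exactly the hypothesis that the de Jong--Starr theorem requires, so it is where the nef-tangent-bundle assumption enters.

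Next I would build a birational section. Choose a rational curve $\ell \subset Y$ with $-K_Y\cdot \ell=\iota_Y$, normalize it by $\P^1\to \ell$, and form the fiber product $X_\ell=X\times_Y \P^1$ with its projection $f_\ell\colon X_\ell\to \P^1$. Since $f$ is smooth with separably rationally connected fibers, $f_\ell$ has the same property, so \cite{DJS03} furnishes a section $\tilde{\ell}$; pushing it into $X$ yields a rational curve $\tilde{\ell}_X$ with $f|_{\tilde{\ell}_X}$ birational onto $\ell$, i.e.\ a birational section. Among all choices of $\ell$ with $-K_Y\cdot\ell=\iota_Y$ and all birational sections over them, the anticanonical degree $-K_X\cdot \tilde{\ell}_X$ attains a minimum; I fix a minimizer $\tilde{\ell_0}_X$ (over some $\ell_0$) and let $\cM\subset {\rm RatCurves}^n(X)$ be an irreducible component containing $[\tilde{\ell_0}_X]$.

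The hard part, and the heart of the proposition, is showing $\cM$ is unsplit. I would argue by contradiction via bend-and-break: if a member of $\cM$ degenerated to a reducible or non-reduced connected chain, I would push the chain forward to $Y$. Its class equals that of $\ell_0$, so its total anticanonical degree on $Y$ is $\iota_Y$; since $\iota_Y$ is the minimal possible value, exactly one component $C_0$ of the chain can map onto a rational curve $\ell'$ with $-K_Y\cdot \ell'=\iota_Y$, and the remaining components must be contracted into fibers of $f$. One then checks that $C_0\to\ell'$ has degree one, so $C_0$ is again a birational section over a minimal curve, while the contracted components contribute strictly positive anticanonical degree because $X$ is Fano; hence $-K_X\cdot C_0<-K_X\cdot \tilde{\ell_0}_X$, contradicting minimality. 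This is the mechanism of \cite[Proposition~4.14]{Wat21}, and the delicate bookkeeping — confirming that the horizontal component is genuinely a degree-one section rather than a multisection — is where I expect the real work to lie.

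Finally, with $\cM$ unsplit I would invoke \cite[Proposition~4.4]{KW21}: the class $[\tilde{\ell_0}_X]$ spans an extremal ray and its contraction $g\colon X\to Z$ is a smooth geometric quotient for $\cM$ in the sense of \cite{BCD}, which is the second bulleted assertion. For the first, I note that every member of $\cM$ is numerically equivalent to $\tilde{\ell_0}_X$, so its image under $f$ has the same class as $\ell_0$; minimality of $\iota_Y$ then forces each $f|_{\tilde{\ell}_X}$ to be degree one onto a curve of anticanonical degree $\iota_Y$, which is precisely the birational-section statement. The covering property follows because $g$ is surjective with positive-dimensional fibers that are swept out by $\cM$-curves.
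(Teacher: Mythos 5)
Your proposal follows essentially the same route as the paper: construct a section via de Jong--Starr applied to the fiber product over a normalized minimal curve $\ell\subset Y$, minimize the anticanonical degree to get $\tilde{\ell_0}_X$, prove unsplitness of $\cM$ by the degeneration argument of \cite[Proposition~4.14]{Wat21}, and conclude via \cite[Proposition~4.4]{KW21}. The paper's own proof is just a terser version of this, citing the same two references for the two key steps, so your write-up (which additionally sketches the bend-and-break bookkeeping) is a faithful expansion rather than a different argument.
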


\begin{definition}[{[cf.~\cite[Definition~1]{MOSW15}]}]\label{def:FT}  Let $X$ be a smooth projective variety with nef tangent bundle. We say that $X$ is an {\it FT-manifold} if every elementary contraction of $X$ is a $\P^1$-bundle.  
\end{definition}

\begin{example}\label{eg:FT} The variety $\P(T_{\P^2})$ is isomorphic to a hyperplane section of a Segre $4$-fold $\P^2\times \P^2 \subset \P^8$. Since $\P(T_{\P^2})$ admit two $\P^1$-bundle structures over $\P^2$ and $\rho_{\P(T_{\P^2})}=2$, it is an FT-manifold. The projective line $\P^1$ is also a basic example of an FT-manifold.
\end{example}

\begin{proposition}\label{prop:dim1} Let $X$ be a smooth Fano variety with nef tangent bundle. Assume that $f: X\to Y$ is an extremal contraction and $\dim Y=1$. Then $X$ is isomorphic to a product of $\P^1$ and a variety $Z$.
\end{proposition}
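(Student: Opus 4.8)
The plan is to show that the product morphism $h=(f,g)\colon X\to \P^1\times Z$ is an isomorphism, where $g$ is the contraction supplied by the minimal birational section machinery. First I would pin down the base: by Theorem~\ref{them:KW21:propert} the target $Y$ is a smooth Fano variety, and since $\dim Y=1$ the only possibility is $Y\cong \P^1$; here $\rho_Y=1$, so $\rho_X=2$. Applying Proposition~\ref{prop:bir:sec:contr} to $f$ produces an unsplit covering family $\cM$ whose members $\tilde{\ell}_X$ are birational sections of $f$ (so $f|_{\tilde{\ell}_X}$ has degree one onto $\P^1$), together with the contraction $g\colon X\to Z$ of the extremal ray $R_g=\R_{\geq 0}[\tilde{\ell}_X]$. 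Since no $\tilde{\ell}_X$ is contracted by $f$, the rays $R_f$ and $R_g$ are distinct.

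Next I would identify the fibers of $g$. Fix a fiber $G$ of $g$ and an $\cM$-curve $C=\tilde{\ell}_X\subseteq G$. If $G$ contained a curve $B$ contracted by $f$, then $[B]$ would lie in both $R_f$ and $R_g$; but $\NE(X)$ is simplicial by Theorem~\ref{them:KW21:RC}, so two distinct extremal rays meet only at the origin, forcing $[B]=0$, which is absurd. Hence $f|_G$ is quasi-finite and $\dim G\leq 1$; as $C\subseteq G$ this gives $\dim G=1$. By Theorem~\ref{them:KW21:propert} the fiber $G$ is a smooth Fano curve, so $G\cong \P^1$, and the irreducible curve $C\subseteq G$ must equal $G$. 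Therefore $f|_G\colon G\to \P^1$ is a degree-one morphism of smooth rational curves, hence an isomorphism, and $g\colon X\to Z$ is a $\P^1$-bundle with $\dim Z=\dim X-1$ and $Z$ smooth.

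Finally I would prove that $h=(f,g)\colon X\to \P^1\times Z$ is an isomorphism. Since $\mathrm{pr}_2\circ h=g$, for $w=(t,z)$ the fiber $h^{-1}(w)$ is contained in $G_z=g^{-1}(z)$, on which $h=(f|_{G_z},z)$ with $f|_{G_z}$ an isomorphism onto $\P^1$; thus every scheme-theoretic fiber of $h$ is a single reduced point. Consequently $h$ is proper and quasi-finite, hence finite, and by miracle flatness (as $X$ is Cohen--Macaulay and $\P^1\times Z$ is regular of the same dimension $\dim X$) it is flat. The fiber length being $1$ forces $h_*\cO_X$ to be an invertible $\cO_{\P^1\times Z}$-algebra, hence equal to $\cO_{\P^1\times Z}$, so $h$ is an isomorphism; this yields $X\cong \P^1\times Z$ as desired.

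The step I expect to be the main obstacle is the very last one, because it is exactly where positive characteristic can interfere: a bijective morphism of smooth varieties need not be an isomorphism (the Frobenius is the standard warning). The device that rescues the argument without Hodge theory or vanishing theorems is that the fibers of $g$ map \emph{isomorphically}, not merely bijectively, onto $\P^1$; this makes each fiber of $h$ a single reduced point of length one, so miracle flatness together with the length-one computation upgrades $h$ to an honest isomorphism and circumvents any separability issue.
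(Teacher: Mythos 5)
Your proposal is correct and follows the same route as the paper: both invoke Proposition~\ref{prop:bir:sec:contr} to obtain the second contraction $g\colon X\to Z$, observe that every fiber of $g$ is a section of $f\colon X\to Y\cong\P^1$, and conclude $X\cong\P^1\times Z$. The paper states this in one line, whereas you supply the details it leaves implicit (distinctness of the two rays, quasi-finiteness of $f$ on $g$-fibers, and the miracle-flatness argument showing $(f,g)$ is an isomorphism rather than merely a bijection), but the underlying argument is identical.
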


\begin{proof} We employ the notation as in Proposition~\ref{prop:bir:sec:contr}. Remark that $Y$ is $\P^1$. The contraction $g: X\to Z$ is a $\P^1$-bundle; moreover any fiber of $g$ is a section of $f$; this yields that $X$ is isomorphic to a product of $\P^1$ and $Z$.
\end{proof}

\begin{corollary}\label{cor:FT} Let $M$ be a smooth Fano variety with nef tangent bundle. Assume that $f: M\to X$ is an extremal contraction onto an FT-manifold $X$. Then $M$ is isomorphic to a product of $X$ and a variety $Y$.
\end{corollary}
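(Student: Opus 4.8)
The plan is to argue by induction on the Picard number $\rho_X$, peeling off one $\P^1$-bundle structure of $X$ at each step and using Proposition~\ref{prop:dim1} as the engine. We may assume $\dim X>0$ (otherwise $M=F$ and the statement is vacuous). By Theorem~\ref{them:KW21:propert} we have $\rho_M=\rho_X+1$, so $f$ is elementary and its fibers $F$ satisfy $\rho_F=1$. Because $T_X$ is nef, every rational curve $C\subset X$ has $-K_X\cdot C\geq 2$: the derivative of the normalization $\nu\colon\P^1\to C$ gives a nonzero map $T_{\P^1}\to\nu^{\ast}T_X$ into a nef bundle, forcing a summand of degree $\geq 2$. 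Hence $\iota_X=2$, and (using that $\NE(X)$ is simplicial by Theorem~\ref{them:KW21:RC}, with each extremal ray contracted by a $\P^1$-bundle) every minimal rational curve on $X$ is numerically, hence actually, a fiber of one of these $\P^1$-bundle contractions. Applying Proposition~\ref{prop:bir:sec:contr} to $f$ produces an unsplit covering family $\cM$ of birational sections $\tilde\ell_X$; their images $f(\tilde\ell_X)$ form an irreducible covering family of minimal curves, hence are the fibers of a single $\P^1$-bundle $h\colon X\to W$. We also obtain the contraction $g\colon M\to Z$ of the ray $R_S=\R_{\geq 0}[\tilde\ell_{0,X}]$.

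The base case $\rho_X=1$ forces $X\cong\P^1$, and Proposition~\ref{prop:dim1} gives $M\cong\P^1\times F=X\times F$. For the inductive step, consider the two distinct rays $R_F$ (contracted by $f$) and $R_S$ (contracted by $g$). Since $h\circ f$ contracts both and lands on $W$ with $\rho_W=\rho_M-2$, it is exactly the contraction of the face $\langle R_F,R_S\rangle$, with fibers $M_\ell:=f^{-1}(\ell)$. Iterating Theorem~\ref{them:KW21:propert}, each $M_\ell$ is a smooth Fano variety with nef tangent bundle carrying the one-dimensional contraction $f|_{M_\ell}\colon M_\ell\to\ell\cong\P^1$, so Proposition~\ref{prop:dim1} yields $M_\ell\cong\ell\times F$ compatibly with the restrictions $f|_{M_\ell}$ and $g|_{M_\ell}$. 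Letting $\bar g\colon Z\to W$ be the factorization of $h\circ f$ through $g$, the morphism $(f,g)\colon M\to X\times_W Z$ restricts to this product isomorphism on each fiber over $W$; as a fibrewise isomorphism of smooth proper $W$-schemes it is an isomorphism, so $M\cong X\times_W Z$ with $\bar g\colon Z\to W$ an $F$-bundle (an elementary contraction of the Fano variety $Z$).

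It remains to trivialize $\bar g$, and this is where the main work lies: one must show that the base $W$ of the $\P^1$-bundle $h\colon X\to W$ is again an FT-manifold. Granting this, $W$ is an FT-manifold with $\rho_W=\rho_X-1$, so the induction hypothesis applied to the extremal contraction $\bar g\colon Z\to W$ gives $Z\cong W\times F$, whence $M\cong X\times_W(W\times F)\cong X\times F$. To see that $W$ is FT, take any elementary contraction $\psi\colon W\to W'$ with ray $R'$; the corresponding ray of $X$ is contracted by a second $\P^1$-bundle (by the FT-property of $X$), and contracting the rank-two face it spans with the fiber ray of $h$ realizes $X\to W'$ as a fibration whose two-dimensional fibers are smooth Fano surfaces with nef tangent bundle carrying two $\P^1$-rulings. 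By Theorem~\ref{them:3-fold} such a surface is $\P^1\times\P^1$, so collapsing the ruling coming from $h$ shows that the fibers of $\psi$ are $\P^1$; since $\psi$ is smooth by Theorem~\ref{them:KW21:propert}, it is a $\P^1$-bundle.

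I expect the main obstacle to be precisely this last lemma---stability of the FT-property under passing to the base of a $\P^1$-bundle---together with the verification that the fibrewise isomorphism $(f,g)\colon M\to X\times_W Z$ is a genuine isomorphism. The remaining steps are formal consequences of Proposition~\ref{prop:dim1} and the cone-theoretic bookkeeping permitted by the simpliciality of $\NE(M)$ in Theorem~\ref{them:KW21:RC}.
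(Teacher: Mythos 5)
Your route is genuinely different from the paper's: the paper disposes of this corollary by citing Proposition~\ref{prop:dim1}, Theorem~\ref{them:KW21:propert} and the complementary-face argument of [MOSW15, Proposition~5], which treats the whole extremal face contracted by $f$ at once inside the simplicial cone $\cNE(M)$; you instead induct on $\rho_X$, peeling off one $\P^1$-bundle structure $h\colon X\to W$ at a time, splitting $M\cong X\times_W Z$ fibrewise via Proposition~\ref{prop:dim1} applied to $f^{-1}(\ell)$, and descending to $\bar g\colon Z\to W$. The inductive engine itself is sound, and your verification that $W$ is again an FT-manifold (via the $\P^1\times\P^1$ fibers of the two-dimensional face contractions of $X$, using Theorem~\ref{them:3-fold}) is correct.

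There is, however, a genuine gap at the very first step. You claim that Theorem~\ref{them:KW21:propert} gives $\rho_M=\rho_X+1$, ``so $f$ is elementary.'' That theorem only applies to contractions of extremal rays and cannot be used to show that an arbitrary extremal contraction is elementary; in this paper an extremal contraction means the contraction of an extremal \emph{face} $\sigma$ (see the discussion preceding Definition~\ref{def:}, where $f$ is ``a composition of contractions of extremal rays''), so in general $\rho_M=\rho_X+\dim\sigma$. Your entire induction is built on the elementary case, and the corollary is needed for higher-dimensional faces in the proof of Theorem~\ref{them:main}: for instance, when the elementary contraction $X\to Y$ lands on $Y\cong\P^2\times\P^1$, the proof uses the contraction $X\to\P^2\times\P^1\to\P^1$ onto the FT-manifold $\P^1$, which contracts a two-dimensional face. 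Nor can you reduce to the elementary case by factoring $f$ as $M\to M_1\to X$ and applying your result to the elementary piece $M_1\to X$: that yields $M_1\cong X\times Y_1$, but $X\times Y_1$ need not be an FT-manifold (e.g.\ $Y_1\cong\P^2$), so the induction does not restart. A secondary, fixable point: your assertion that every minimal rational curve of $X$ is a fiber of one of the $\P^1$-bundle contractions does not follow from simpliciality alone (a class $\sum\lambda_i[F_i]$ with $\lambda_i\geq 0$ and $\sum\lambda_i=1$ need not be a single $[F_i]$); you need that the unsplit dominating family through $\ell_0$ spans an extremal ray, which is exactly the input from [KW21, Proposition~4.4] already used in Proposition~\ref{prop:bir:sec:contr}.
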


\begin{proof} This follows from Proposition~\ref{prop:dim1}, Theorem~\ref{them:KW21:propert} and the same argument as in \cite[Proposition~5]{MOSW15}.
\if0
We follow the idea of \cite[Proposition~5]{MOSW15}. For the reader’s convenience, we describe the idea of the proof. 

Let $\sigma$ be the extremal face corresponding to $f$. We denote by $R_1, \ldots, R_n$ all extremal rays which are not contained in the face $\sigma$; we denote by $\overline{\sigma}$ the extremal face generated by $R_1, \ldots, R_n$. Let $g:  M\to Y$ (resp. $\tau_i: X\to X_i$~($i=1, 2, \ldots, n$)) be the contraction of the face $\overline{\sigma}$ (resp. $\langle \sigma, R_i\rangle$). Since the Kleiman-Mori cone $\NE(M)$ is simplicial by Theorem~\ref{them:KW21:RC}, we have $$\NE(X)=f_{\ast}(R_1)+\ldots+f_{\ast}(R_n).$$ 
For the contraction $\pi_i: X\to X_i$ of the extremal ray $f_{\ast}(R_i)$, we denote by $\Gamma_i$ a fiber of $\pi_i$. Since $f^{-1}(\Gamma_i)$ is a smooth Fano variety with nef tangent bundle (see for instance \cite[Proposition~3.6]{Wat14}), Proposition~\ref{prop:dim1} implies that $f^{-1}(\Gamma_i)\cong \Gamma_i\times Z$ for some variety $Z$. 
\fi
\end{proof}

\subsection{Projective bundles}
\begin{definition}[{\cite[Definition~3.2]{CS21}}]\label{def:Brauer} The (cohomological) Brauer group of a scheme $Y$ is ${\rm Br}(Y):=H_{\acute{e}t}^2(Y, {\G}_m)$.
\end{definition}

\begin{proposition}\label{prop:Brauer} Let $f: X\to Y$ be a $\P^n$-bundle. If the Brauer group 
${\rm Br}(Y)$ vanishes, then there exists a vector bundle $\cE$ of rank $n+1$ on $Y$ such that $X\cong \P(\cE)$.
\end{proposition}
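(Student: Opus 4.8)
The plan is to reduce the statement to descending the fibrewise hyperplane class $\cO(1)$ along $f$, and to identify the obstruction to this descent with a class in ${\rm Br}(Y)=H^2_{\acute{e}t}(Y,\G_m)$, which vanishes by hypothesis. First I would record the basic cohomological properties of $f$. Since $f$ is smooth and proper with every fibre isomorphic to $\P^n$, the fibres are geometrically integral with $H^0(\cO)=k$, so $f_\ast\cO_X=\cO_Y$ and $f_\ast\G_{m,X}=\G_{m,Y}$. The relative Picard sheaf $R^1f_\ast\G_m$ is then étale-locally the constant sheaf $\underline{\Z}$, because $\Pic(\P^n)=\Z\cdot\cO(1)$; moreover the fibrewise ample generator $\cO(1)$ is canonical (it is the unique ample generator), hence descends to a global section $\xi\in H^0(Y,R^1f_\ast\G_m)=\Pic_{X/Y}(Y)$.

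Next I would invoke the low-degree (five-term) exact sequence of the Leray spectral sequence $H^p_{\acute{e}t}(Y,R^qf_\ast\G_m)\Rightarrow H^{p+q}_{\acute{e}t}(X,\G_m)$. Using $f_\ast\G_m=\G_m$, the relevant portion reads
\[
\Pic(X)\lra \Pic_{X/Y}(Y)\xrightarrow{\ d_2\ } {\rm Br}(Y)\lra {\rm Br}(X),
\]
where the connecting map $d_2$ is precisely the obstruction to lifting a section of the relative Picard sheaf to an honest line bundle on $X$. Applying it to $\xi$ and using ${\rm Br}(Y)=0$, we get $d_2(\xi)=0$, so $\xi$ is the image of some $L\in\Pic(X)$; by construction $L$ restricts to $\cO(1)$ on every fibre of $f$.

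Finally I would produce $\cE$ by pushing forward. Since $H^{>0}(\P^n,\cO(1))=0$ and $h^0(\P^n,\cO(1))=n+1$, cohomology and base change shows that $\cE:=f_\ast L$ is locally free of rank $n+1$ and that its formation commutes with base change. The evaluation map $f^\ast\cE\to L$ is fibrewise the isomorphism given by the complete linear system $|\cO(1)|$; hence it induces a $Y$-morphism $X\to\P(\cE)$ that is an isomorphism on each fibre, and therefore an isomorphism. This gives $X\cong\P(\cE)$ with $\cE$ of rank $n+1$, as required. (Equivalently, one may phrase the argument via the central extension $1\to\G_m\to\Gl_{n+1}\to\PGl_{n+1}\to 1$: the bundle $f$ defines a class in $H^1_{\acute{e}t}(Y,\PGl_{n+1})$, and its image in $H^2_{\acute{e}t}(Y,\G_m)={\rm Br}(Y)$ is exactly the obstruction to lifting it to a $\Gl_{n+1}$-torsor, i.e.\ to a vector bundle.)

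The step requiring the most care is the construction of the canonical section $\xi$ together with the exact sequence: one must check that the fibrewise hyperplane classes genuinely glue to a global section of $R^1f_\ast\G_m$ (invariance of the ample generator under étale descent) and that $d_2(\xi)$ is the obstruction we want, so that ${\rm Br}(Y)=0$ produces the line bundle $L$. The remaining cohomology-and-base-change computation and the identification $X\cong\P(\cE)$ are routine.
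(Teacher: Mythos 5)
Your proof is correct and is essentially the standard argument that the paper itself does not write out but outsources to the reference it cites: the obstruction to descending the fibrewise $\cO(1)$ (equivalently, to lifting the associated $\mathrm{PGL}_{n+1}$-torsor to $\mathrm{GL}_{n+1}$) is a class in ${\rm Br}(Y)=H_{\acute{e}t}^2(Y,\G_m)$ read off from the Leray sequence for $\G_m$, and its vanishing yields the line bundle $L$ and then $\cE=f_\ast L$. The only step you should make explicit is that, with this paper's definition of a $\P^n$-bundle (a smooth projective morphism all of whose fibres are isomorphic to $\P^n$), the morphism is étale-locally trivial --- a standard consequence of $H^1(\P^n,T_{\P^n})=0$ and the smoothness of the Isom-scheme --- which is what makes $R^1f_\ast\G_m$ a $\Z$-local system with a canonical positive generator and justifies the existence of your section $\xi$.
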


\begin{proof} See for instance \cite{MurSE}.
\end{proof}

\begin{corollary}\label{cor:Brauer} Let $f: X\to Y$ be a $\P^n$-bundle. If $Y$ is rational, then there exists a vector bundle $\cE$ of rank $n+1$ on $Y$ such that $X\cong \P(\cE)$.
\end{corollary}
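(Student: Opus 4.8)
By Proposition~\ref{prop:Brauer}, it suffices to prove that the cohomological Brauer group ${\rm Br}(Y)=H_{\acute{e}t}^2(Y,\G_m)$ vanishes: once this is established, the cited proposition immediately produces a rank $n+1$ vector bundle $\cE$ on $Y$ with $X\cong \P(\cE)$. Recall that in our convention a $\P^n$-bundle is in particular an $F$-bundle, so $Y$ is smooth and projective. The plan is therefore to reduce the computation of ${\rm Br}(Y)$ to that of projective space by means of the birational invariance of the Brauer group.

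First I would invoke the fact that the cohomological Brauer group is a birational invariant among smooth projective varieties over a field, a statement valid in arbitrary characteristic. Since $Y$ is rational, it is birational to $\P^{\dim Y}_k$, and hence ${\rm Br}(Y)\cong {\rm Br}(\P^{\dim Y}_k)$. Next I would compute the right-hand side: the projective bundle formula for the Brauer group (applied to the trivial bundle over $\Spec k$) gives ${\rm Br}(\P^n_k)\cong {\rm Br}(k)$, and because $k$ is algebraically closed we have ${\rm Br}(k)=0$, e.g. by Tsen's theorem, since there are no nontrivial central division algebras over $k$. Combining the two steps yields ${\rm Br}(Y)=0$, and Proposition~\ref{prop:Brauer} then finishes the proof.

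The delicate point is the birational invariance of ${\rm Br}$ in positive characteristic, as this is the only nonformal input. While the prime-to-$p$ torsion is controlled by standard étale-cohomological arguments (via the Kummer sequence and the fact that for a rational variety the group $H_{\acute{e}t}^2(Y,\mu_\ell)$ is generated by algebraic classes), one must also ensure that the $p$-primary part is a birational invariant so that the conclusion holds over any algebraically closed field, as demanded by the characteristic-free aim of the paper. I would address this by appealing to the known characteristic-free formulation of the invariance for smooth projective varieties, together with Gabber's theorem identifying the cohomological and Azumaya Brauer groups on quasi-projective schemes, which guarantees that the class obstructing $X\cong\P(\cE)$ indeed lives in the group we have shown to vanish.
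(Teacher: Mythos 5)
Your argument is correct and follows essentially the same route as the paper: the paper likewise reduces to showing ${\rm Br}(Y)=0$ via Proposition~\ref{prop:Brauer}, obtains ${\rm Br}(Y)\cong {\rm Br}(\P^{\dim Y}_k)\cong {\rm Br}(k)$ by birational invariance of the Brauer group for smooth projective varieties together with the computation for projective space (citing \cite[Theorem~5.1.3, Proposition~5.2.2]{CS21}), and concludes with ${\rm Br}(k)=0$ for $k$ algebraically closed (\cite[Corollary~1.2.4]{CS21}). Your additional remarks on the $p$-primary part and Gabber's theorem are sensible precautions but are already subsumed by the characteristic-free references the paper uses.
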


\begin{proof} By \cite[Theorem~5.1.3, Proposition~5.2.2]{CS21}, we see that $${\rm Br}(Y)\cong {\rm Br}(\P^n_k)\cong {\rm Br}(k).$$ Then \cite[Corollary~1.2.4]{CS21} implies that ${\rm Br}(k)$ vanishes; thus our assertion follows from Proposition~\ref{prop:Brauer}.
\end{proof}

\section{Fano bundles over $\P^2$, $\P^3$ and $Q^3$} 

The major difficulty of the proof of Theorem~\ref{them:main} is to study the cases where a smooth Fano $4$-fold with nef tangent bundle admits a $\P^2$-bundle structure over $\P^2$ or a $\P^1$-bundle structure over $\P^3$ and over $Q^3$. In this section, we shall study such cases. 

\subsection{Rank three Fano bundles over $\P^2$} 

\begin{proposition}\label{prop:P2-bundle/P2} Let $X$ be a smooth Fano $4$-fold with nef tangent bundle. Assume that $f_1: X \to \P^2$ is a $\P^2$-bundle. Then $X$ is isomorphic to $\P^2\times \P^2$.
\end{proposition}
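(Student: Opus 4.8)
The plan is to use the $\P^2$-bundle structure to force $\rho_X=2$, produce the second elementary contraction, and show that it is again a $\P^2$-bundle over $\P^2$, after which $X$ becomes a product. Since $f_1\colon X\to\P^2$ is a $\P^2$-bundle we have $\rho_X=\rho_{\P^2}+1=2$, and by Theorem~\ref{them:KW21:RC} the cone $\NE(X)$ is simplicial with exactly two extremal rays; one of them, $R_1$, is contracted by $f_1$. Applying Proposition~\ref{prop:bir:sec:contr} to $f_1$ (whose target $\P^2$ has the lines as its minimal rational curves, $\iota_{\P^2}=3$), I obtain an unsplit covering family $\cM$ whose members are birational sections of $f_1$ over lines, and whose associated contraction $f_2\colon X\to Y$ is the contraction of the remaining ray $R_2$. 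By Theorem~\ref{them:KW21:propert}, $f_2$ is smooth, both $Y$ and every fiber $F$ of $f_2$ are smooth Fano with nef tangent bundle, and $\rho_Y=\rho_F=1$.

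First I would record the basic constraint on $F$. Since $F$ contains a member of $\cM$, which maps birationally onto a line of $\P^2$, the restriction $f_1|_F\colon F\to\P^2$ is nonconstant, hence finite because $\rho_F=1$. Therefore $1\le\dim F\le 2$, and the proof splits into the cases $\dim F=2$ and $\dim F=1$.

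The main obstacle is to rule out $\dim F=1$. In that case $F\cong\P^1$ (the only Fano curve), so $f_2$ is a smooth $\P^1$-fibration; every fiber is numerically equivalent to a member of $\cM$, hence satisfies $f_1^{\ast}H\cdot F=1$ and maps isomorphically onto a line of $\P^2$. Sending a fiber to the line it covers defines a morphism $h\colon Y\to(\P^2)^{\vee}$ to the dual plane. I would then fix a fiber $G=f_1^{-1}(p)\cong\P^2$ of $f_1$ and consider the composite $G\to\P^1$ sending $x$ to the line covered by the $f_2$-fiber through $x$ (this is $h\circ(f_2|_G)$ followed by the inclusion of $\{\text{lines through }p\}\cong\P^1$). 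As $\P^2$ admits no nonconstant morphism to a curve, since two distinct fibers would be disjoint curves in $\P^2$, this map is constant; hence all $f_2$-fibers meeting $G$ cover one and the same line through $p$. Letting $p$ vary, this says that through every point of $\P^2$ there passes a unique line of the family $h(Y)\subset(\P^2)^{\vee}$, and that these lines cover $\P^2$; but two such lines would then have to be disjoint, which is impossible in $\P^2$. This contradiction eliminates $\dim F=1$.

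It remains to treat $\dim F=2$, which I expect to be routine. Then $\dim Y=2$, so $Y$ is a smooth Fano surface with $\rho_Y=1$ and nef tangent bundle, forcing $Y\cong\P^2$ by Theorem~\ref{them:3-fold} (note $Q^2\cong\P^1\times\P^1$ has Picard number $2$ and is excluded); likewise each fiber $F\cong\P^2$. Restricting $f_1$ to a fiber $F$, the finite map $f_1|_F\colon\P^2\to\P^2$ carries the minimal section isomorphically onto a line, which forces $\deg(f_1|_F)=1$, i.e.\ $f_1|_F$ is an isomorphism. Consequently $(f_1,f_2)\colon X\to\P^2\times Y=\P^2\times\P^2$ restricts to an isomorphism on every fiber of $f_2$, so it is a finite, birational morphism onto a smooth variety and therefore an isomorphism by Zariski's main theorem. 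This yields $X\cong\P^2\times\P^2$.
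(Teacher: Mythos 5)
Your proof is correct in outline and reaches the right conclusion, but it follows a genuinely different route from the paper. The paper's proof is short and classification-driven: it invokes Theorem~\ref{them:KW21:propert} to get the second elementary contraction $f_2$, uses Sato's classification of varieties with two projective-space-bundle structures together with Theorem~\ref{them:3-fold} to reduce to ``$\P^1$-bundle over $Q^3$'' versus ``$\P^2$-bundle over $\P^2$'', kills the first option by comparing ranks of Chow groups via the projective bundle formula, and then quotes Sato again to conclude $X\cong\P^2\times\P^2$. You instead avoid Sato's theorem and the Chow-group comparison entirely: you use Proposition~\ref{prop:bir:sec:contr} to realize $f_2$ as the quotient by minimal birational sections over lines, note that $f_1$ is finite on $f_2$-fibers (so $\dim F\in\{1,2\}$, which also disposes of the case $\dim X_2=1$ for free), eliminate $\dim F=1$ by an elementary incidence argument with lines in $\P^2$, and in the case $\dim F=2$ show directly that $(f_1,f_2)$ is an isomorphism onto $\P^2\times\P^2$. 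This is more self-contained and arguably better suited to the paper's characteristic-free goal, at the cost of being longer; the paper's version outsources the hard work to \cite{Sato85}.

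Two steps need shoring up. First, in the case $\dim F=1$ you assert without justification that $y\mapsto f_1(f_2^{-1}(y))$ defines a \emph{morphism} $h\colon Y\to(\P^2)^\vee$; your incidence argument genuinely uses this (constancy of $G\to\P^1$ is a statement about morphisms from $\P^2$). It is true: since $f_1^{\ast}\cO_{\P^2}(1)$ has degree $1$ on every $f_2$-fiber, $\cW:=(f_2)_{\ast}f_1^{\ast}\cO_{\P^2}(1)$ is locally free of rank $2$ and the evaluation $H^0(\P^2,\cO(1))\otimes\cO_Y\to\cW$ is surjective (restriction of linear forms to a line is surjective), so the universal property of the Grassmannian of lines gives $h$; but a sentence to this effect is needed. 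Second, at the very end you deduce that $(f_1,f_2)$ is \emph{birational} from its being injective; in positive characteristic a bijective finite morphism need not be birational (Frobenius), so you should instead observe that every scheme-theoretic fiber of $(f_1,f_2)$ over $(p,y)$ is the fiber of the isomorphism $f_1|_{F_y}$ over $p$, hence a reduced point; thus the map is finite, unramified and bijective onto the normal variety $\P^2\times\P^2$, and therefore an isomorphism. With these two repairs the argument is complete.
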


\begin{proof} By Theorem~\ref{them:KW21:propert}, we may find another smooth elementary contraction $f_2: X\to X_2$ besides $f_1$. Applying Theorem~\ref{them:3-fold}, Theorem~\ref{them:KW21:propert} and \cite{Sato85}, we see that $f_2: X\to X_2$ is a $\P^1$-bundle over $Q^3$ or a $\P^2$-bundle over $\P^2$. We claim that $f_2: X\to X_2$ is not a $\P^1$-bundle over $Q^3$. To prove this, assume that $f_2: X\to X_2$ is a $\P^1$-bundle over $Q^3$. Then by Corollary~\ref{cor:Brauer}, $f_1$ and $f_2$ are given by the projectivizations of vector bundles. Let us consider the Chow ring of $X$. Since $f_1: X \to \P^2$ is a $\P^2$-bundle over $\P^2$, \cite[Theorem~9.6]{EH16} tells us that the rank of the $A^3(X)$ is three; however, since $f_2: X \to Q^3$ is a $\P^1$-bundle over $Q^3$, \cite[Theorem~9.6]{EH16} tells us that the rank of the $A^3(X)$ is two; this is a contradiction. As a consequence, $f_2: X\to X_2$ is a $\P^2$-bundle over $\P^2$. Applying \cite{Sato85}, we conclude that $X$ is isomorphic to $\P^2\times \P^2$.
\end{proof}

\subsection{Rank two Fano bundles over $\P^3$} Let us first recall the definition of the null-correlation bundle:

\begin{definition}[{see for instance \cite[Section~4.2]{OSS}, \cite[Example~8.4.1]{Hart78} and \cite{Wev81}}]\label{def:null} Let $\cE$ be a rank $2$ vector bundle on $\P^3$. We say that $\cE$ is a {\it null-correlation bundle} if it fits into an exact sequence
$$
0\rightarrow \cO_{\P^3} \xrightarrow{s} \Omega_{\P^3}(2)\rightarrow \cE(1)\rightarrow 0,
$$ 
where $s$ is a nowhere vanishing section of $\Omega_{\P^3}(2)$.
\end{definition}

In this subsection, we prove the following:

\if0
Let us recall the basic definitions and fundamental facts related to vector bundles on $\P^n$.

Let $\cE$ be a rank $2$ vector bundle on $\P^n$. The Chow ring of $\P^n$ is isomorphic to $\Z[H]/(H^{n+1})$, where $H$ is the class of a hyperplane; then the Chern classes $c_i(\cE)$'s can be considered as integers $c_i$'s, that is, $c_i(\cE)=c_iH^i$. We say that $\cE$ is {\it normalized} if $c_1=0$ or $-1$. 
We also say that $\cE$ is {\it stable} (resp. {\it semistable}) if for every invertible subsheaf $\cL$ of $\cE$, $c_1(\cL)<\dfrac{1}{2} c_1(\cE)$ (resp. $c_1(\cL)\leq \dfrac{1}{2} c_1(\cE)$). 
\fi

\begin{proposition}\label{prop:P1-bundle/P3} Let $X$ be a smooth Fano $4$-fold with nef tangent bundle. Assume that $f_1: X \to \P^3$ is a $\P^1$-bundle. Then $X$ is isomorphic to one of the following:
\begin{enumerate}
\item $\P^1\times \P^3$; 
\item $\P(\cN)$, where $\cN$ is a null-correlation bundle.
\end{enumerate}
\end{proposition}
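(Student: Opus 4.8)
The plan is to present $X$ as a projectivized rank-two bundle over $\P^3$, to produce the second elementary contraction, and to run a short case analysis on the dimension of its target; only two cases survive, and they give the two asserted varieties.

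First I would note that, $\P^3$ being rational, Corollary~\ref{cor:Brauer} gives an isomorphism $X\cong\P(\cE)$ with $\cE$ a rank-two bundle on $\P^3$. After twisting by a line bundle I normalize $\cE$ so that $c_1(\cE)=c_1H$ with $c_1\in\{0,-1\}$, and I write $c_2(\cE)=c_2H^2$ and $\xi=\xi_\cE$, so that $-K_X=2\xi+(4-c_1)H$ and $\xi^2=c_1H\xi-c_2H^2$ in the Chow ring. By Theorem~\ref{them:KW21:propert} we have $\rho_X=\rho_{\P^3}+1=2$, and by Theorem~\ref{them:KW21:RC} the cone $\NE(X)$ is simplicial; hence $X$ carries exactly one further elementary contraction $f_2\colon X\to X_2$ besides $f_1$, which by Theorem~\ref{them:KW21:propert} is smooth onto a smooth Fano variety $X_2$ with nef tangent bundle and $\rho_{X_2}=1$, all of whose fibres have Picard number one. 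Thus $\dim X_2\in\{1,2,3\}$, and I treat the three values in turn.

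For the two low-dimensional targets I would argue as follows. If $\dim X_2=1$ then $X_2\cong\P^1$, and Proposition~\ref{prop:dim1} applied to $f_2$ exhibits $X$ as a product $\P^1\times Z$ whose projection to $Z$ is a $\P^1$-bundle; being the remaining elementary contraction (its target is a threefold, hence it is not $f_2$), this projection must be $f_1$, so $Z\cong\P^3$ and $X\cong\P^1\times\P^3$, which is case (1). If $\dim X_2=2$ then $X_2$ is a Fano surface of Picard number one with nef tangent bundle, so $X_2\cong\P^2$ by Theorem~\ref{them:3-fold}, and its two-dimensional fibres (Fano, Picard number one, nef tangent bundle) are likewise $\cong\P^2$; thus $f_2$ is a $\P^2$-bundle over $\P^2$ and Proposition~\ref{prop:P2-bundle/P2} forces $X\cong\P^2\times\P^2$. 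But $\P^2\times\P^2$ has no elementary contraction onto a threefold and so cannot carry $f_1$; this case therefore does not occur.

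The heart of the argument is $\dim X_2=3$. Here $X_2$ is a Fano threefold with $\rho_{X_2}=1$, so $\Pic(X_2)=\Z$, and $f_2$ is a $\P^1$-bundle with fibre $\ell_2$. Applying Proposition~\ref{prop:bir:sec:contr} to $f_1$ yields an unsplit covering family $\cM$ whose smooth geometric quotient is an elementary contraction; since the members of $\cM$ have positive $H$-degree they lie off the ray of $f_1$, so this quotient is $f_2$ and a general fibre $\ell_2$ is a minimal birational section of $f_1$ over a line of $\P^3$. In particular $H\cdot\ell_2=1$, while $-K_X\cdot\ell_2=2$ since $\ell_2$ is a $\P^1$-bundle fibre; combined with $-K_X=2\xi+(4-c_1)H$ this gives $2(\xi\cdot\ell_2)+(4-c_1)=2$, and integrality forces $c_1=0$, $\xi\cdot\ell_2=-1$. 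Writing $\ell_1$ for the fibre of $f_1$ (so $H\cdot\ell_1=0$, $\xi\cdot\ell_1=1$) and $H'=f_2^{\ast}A_2$ for the pullback of the ample generator of $\Pic(X_2)$, the conditions $H'\cdot\ell_2=0$ together with the primitivity of $H'$ give $H'=H+\xi$. Then $(H')^4=0$, since $H'$ is pulled back from a threefold; expanding with $\xi^2=-c_2H^2$ this reads $4-4c_2=0$, so $c_2=1$. A non-split rank-two bundle on $\P^3$ with $c_1=0$ and $c_2=1$ is a null-correlation bundle in the sense of Definition~\ref{def:null}, whence $X\cong\P(\cN)$ as in case (2). (As a consistency check, $(H')^3\cdot H=3-c_2=2$, so $X_2\cong Q^3$.)

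The main obstacle is concentrated in this last case. The decisive input is Proposition~\ref{prop:bir:sec:contr}, which identifies the fibres of $f_2$ with minimal birational sections over \emph{lines} of $\P^3$ and thereby pins the intersection numbers $H\cdot\ell_2=1$ and $\xi\cdot\ell_2=-1$; getting these right, together with the normalization $H'=H+\xi$, is exactly what makes the purely numerical identity $(H')^4=0$ collapse to $c_2=1$. The only remaining delicate point is the identification of a non-split $(c_1,c_2)=(0,1)$ bundle with a null-correlation bundle; I would supply this through the standard description of rank-two bundles with these Chern classes on $\P^3$ (cf.\ the references in Definition~\ref{def:null}), taking care that the construction used is valid in arbitrary characteristic.
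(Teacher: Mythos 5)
Your proof is correct in its main line but takes a genuinely different route from the paper. The paper imports the Szurek--Wi\'sniewski/Hartshorne classification of rank-two Fano bundles on $\P^3$ (seven cases, including the stable bundles with $(c_1,c_2)=(0,3)$ and $(-1,4)$) and then spends its effort eliminating those two stable cases by intersection computations against the second elementary contraction. You instead never see those bundles at all: you run the case analysis on $\dim X_2$, dispose of the targets of dimension $1$ and $2$ via Proposition~\ref{prop:dim1} and Proposition~\ref{prop:P2-bundle/P2}, and in the crucial case $\dim X_2=3$ you apply the minimal-birational-section machinery of Proposition~\ref{prop:bir:sec:contr} to identify the fibres of $f_2$ with birational sections over lines, which pins down $f_1^{\ast}H_1\cdot\ell_2=1$, forces $c_1=0$ and $\xi\cdot\ell_2=-1$ by parity, and collapses $(f_2^{\ast}A_2)^4=0$ to $c_2=1$. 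This is exactly the technique the paper deploys only in its $Q^3$ analysis (Proposition~\ref{prop:P1bundle:Q3}), transplanted to the $\P^3$ case; your computations ($f_2^{\ast}A_2=\xi+f_1^{\ast}H_1$, $(\xi+f_1^{\ast}H_1)^4=4(1-c_2)$) check out, and your argument has the advantage of being self-contained relative to the paper's toolkit and of avoiding any discussion of the $(0,3)$ and $(-1,4)$ bundles. What it does not avoid --- and what you correctly flag but defer --- is the identification of a rank-two bundle on $\P^3$ with $(c_1,c_2)=(0,1)$ as a null-correlation bundle in arbitrary characteristic: note that splitting is numerically impossible and instability contradicts $c_2=1$ via the Serre correspondence, so the bundle is automatically stable, and the needed statement is that stable bundles with these Chern classes are exactly the null-correlation bundles. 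The paper needs precisely the same input (it is how case (ii) enters its list, via \cite{SW2}, \cite{Hart78} and \cite{Wev81}), so your proof is at the same level of rigor on this point, but to be complete you should cite or verify that identification characteristic-freely rather than leave it as a remark.
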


\begin{proof} Let $X$ be a smooth Fano $4$-fold with nef tangent bundle. Assume that $f_1: X \to \P^3$ is a $\P^1$-bundle. By Corollary~\ref{cor:Brauer}, $f_1: X \to \P^3$ is given by the projectivization of a rank $2$ vector bundle $\cE$ on $\P^3$, that is, $f_1: X=\P(\cE)\to \P^3$. We assume that $\cE$ is normalized and consider its Chern classes $c_1(\cE)$ and $c_2(\cE)$ as integers $c_1$ and $c_2$ respectively. We denote by $H_1$ the ample generator of ${\rm Pic}(\P^3)$, by $F_1$ a fiber of $f_1$ and  by $\xi$ the tautological divisor of $\P(\cE)$.  By the same argument as in \cite[Theorem~2.1]{SW2} and \cite[Example~8.4.1]{Hart78} (see also \cite{Wev81}), we see that one of the following holds:
\begin{enumerate}
\item $\cE$ is isomorphic to $\cO_{\P^3}\oplus \cO_{\P^3}$;  
\item $\cE$ is isomorphic to the null-correlation bundle $\cN$; 
\item $\cE$ is isomorphic to $\cO_{\P^3}\oplus \cO_{\P^3}(-1)$; 
\item $\cE$ is isomorphic to $\cO_{\P^3}(-1)\oplus \cO_{\P^3}(1)$;
\item $\cE$ is isomorphic to $\cO_{\P^3}(-2)\oplus \cO_{\P^3}(1)$;
\item $\cE$ is a stable bundle with $(c_1, c_2)=(0, 3)$; 
\item $\cE$ is a stable bundle with $(c_1, c_2)=(-1, 4)$.
\end{enumerate}

If $\cE$ is isomorphic to $\cO_{\P^3}\oplus \cO_{\P^3}(-1)$, $\cO_{\P^3}(-1)\oplus \cO_{\P^3}(1)$ or $\cO_{\P^3}(-2)\oplus \cO_{\P^3}(1)$, then $X=\P(\cE)$ admits a birational contraction, which contradicts to our assumption that the tangent bundle of $X$ is nef. To prove our assertion, it is enough to show that the cases (vi) and (vii) do not occur. In characteristic $0$, it was proved in \cite[Theorem~2.1]{SW2}, but we do not know whether their argument also holds in positive characteristic or not. 

From now on, we prove that the cases (vi) and (vii) do not occur. Since $X$ is a smooth Fano variety with nef tangent bundle, Theorem~\ref{them:KW21:propert} tells us that there exists another smooth elementary contraction $f_2: X\to X_2$. For any ample divisor $H_2\in {\rm Pic}(X_2)$, there exist integers $a, b$ such that 
$$\cO_X(f_2^{\ast}H_2)\cong \cO_X(a\xi+bf_1^{\ast}H_1)\in {\rm Pic}(X).$$ 

Let us first assume that $\cE$ is a rank $2$ stable bundle on $\P^3$ with $(c_1, c_2)=(0, 3)$. We claim that $a$ and $b$ are not equal to $0$. Taking the intersection number of $a\xi+bf_1^{\ast}H_1$ and $F_1$, we have 
$$
a=(a\xi+bf_1^{\ast}H_1)\cdot F_1=f_2^{\ast}H_2\cdot F_1=H_2\cdot {f_2}_{\ast}(F_1)>0.
$$
Moreover, if $b=0$, then we have $\cO_X(f_2^{\ast}H_2)\cong \cO_X(a\xi)\in {\rm Pic}(X)$; in particular $\xi$ is nef; thus $\cE$ is nef and $c_1=0$. %This implies that $g^{\ast}\cE\cong \cO_{\P^1}^{\oplus 2}$ for any non-constant morphism $g: \P^1\to \P^3$. 
By Corollary~\ref{cor:SRC} (iii), $\cE$ is isomorphic to $\cO_{\P^3}^{\oplus 2}$; this is a contradiction. As a consequence, we see that $b\neq 0$. 

Since we have $\xi^2+c_2f_1^{\ast}H_1^2=0$, $f_1^{\ast}H_1^4=0$ and $\xi\cdot f_1^{\ast}H_1^3=1$, we also have 
$$f_1^{\ast}H_1^4=0,\,\, \xi\cdot f_1^{\ast}H_1^3=1,\,\,\xi^2\cdot f_1^{\ast}H_1^2=0,\,\,\xi^3\cdot f_1^{\ast}H_1=-c_2,\,\,\xi^4=0.$$
By using these equations, we have
$$
0=(f_2^{\ast}H_2)^4=(a\xi+bf_1^{\ast}H_1)^4=4ab(-3a^2+b^2).
$$
Since $a, b\neq 0$, we have
$$
-3a^2+b^2=0.
$$
However this is a contradiction.

Secondly assume that $\cE$ is a rank $2$ stable bundle on $\P^3$ with $(c_1, c_2)=(-1, 4)$. 
By the same argument as in the case where $\cE$ is stable and $(c_1, c_2)=(0, 3)$, we have $$a>0,\,\,f_1^{\ast}H_1^4=0,\,\, \xi\cdot f_1^{\ast}H_1^3=1,\,\,\xi^2\cdot f_1^{\ast}H_1^2=-1,\,\,\xi^3\cdot f_1^{\ast}H_1=-3,\,\,\xi^4=7.$$
Thus we have an equation 
$$
0=(a\xi+bf_1^{\ast}H_1)^4=7a^4-12a^3b-6a^2b^2+4ab^3.
$$
Since $a$ is positive, we have
$$
4\left(\dfrac{\,b\,}{a}\right)^3-6\left(\dfrac{\,b\,}{a}\right)^2-12 \left(\dfrac{\,b\,}{a}\right)+7=0.
$$
Then it follows from the rational root theorem that $\dfrac{\,b\,}{a}=\dfrac{\,1\,}{2}$; this implies that $2\xi+f_1^{\ast}H_1$ is nef. Since $X=\P(\cE)$ is a Fano variety, we have 
$$
0\leq \left(2\xi+f_1^{\ast}H_1 \right)\cdot \left(-K_X \right)^3=\left(2\xi+f_1^{\ast}H_1 \right)\cdot \left(2\xi+5f_1^{\ast}H_1 \right)^3=-232.
$$
This is a contradiction. As a consequence, our assertion holds.
\end{proof}

\subsection{Rank two Fano bundles over $Q^3$} In this subsection, we prove the following:

\begin{proposition}\label{prop:P1bundle:Q3} Let $X$ be a smooth Fano $4$-fold with nef tangent bundle. Assume that $f_1: X \to Q^3$ is a $\P^1$-bundle. Then $X$ does not admit another $\P^1$-bundle structure on $Q^3$.
\end{proposition}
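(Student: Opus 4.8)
The plan is to argue by contradiction: assuming $f_2\colon X\to Q^3$ is a second $\P^1$-bundle, I would extract from the two bundle structures an overdetermined Diophantine system with no admissible solution. First I set up the two projectivizations. Since $f_1$ is an elementary contraction of a Fano variety with nef tangent bundle and $\rho_{Q^3}=1$, Theorem~\ref{them:KW21:propert} gives $\rho_X=2$, and Theorem~\ref{them:KW21:RC} shows the Kleiman--Mori cone $\NE(X)$ is simplicial; hence $f_1$ and the hypothetical $f_2$ are exactly the two elementary contractions. As $Q^3$ is rational, Corollary~\ref{cor:Brauer} produces normalized rank two bundles $\cE_1,\cE_2$ with $X=\P(\cE_1)=\P(\cE_2)$. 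I write $\xi_i=\xi_{\cE_i}$, $H_i=f_i^{\ast}h$ (with $h$ the hyperplane class on $Q^3$) and let $F_i$ be a fibre of $f_i$; using $-K_{Q^3}=3h$ one has $-K_X=2\xi_i+e_iH_i$ with $e_i=3-c_1(\cE_i)\in\{3,4\}$.

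Next I fix intersection numbers on $X$ relative to $f_1$. On $Q^3$ one has $h^2=2\ell$ and $h^3=2$, so applying the Grothendieck relation for $\cE_1$ together with the projection formula makes all five numbers $\sigma_k:=\int_X\xi_1^{\,k}H_1^{\,4-k}$ explicit in $(c_1,c_2):=(c_1(\cE_1),c_2(\cE_1))$, namely $\sigma_0=0$, $\sigma_1=2$, $\sigma_2=2c_1$, $\sigma_3=2c_1^2-c_2$, $\sigma_4=2c_1^3-2c_1c_2$. Writing $H_2=a\xi_1+bH_1$ with $a,b\in\Z$, and noting that neither fibre is contracted by the opposite contraction (else it would span $R_1\cap R_2=\{0\}$), I get $a=H_2\cdot F_1>0$ and $H_1\cdot F_2>0$.

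The heart of the argument is three constraints on $(a,b,c_1,c_2)$. (I) Because $H_2=f_2^{\ast}h$ and $h^4=0$, we have $H_2^4=0$; expanding in the $\sigma_k$ gives one equation. (II) Because $f_{2\ast}\xi_2=1$ and $\int_{Q^3}h^3=2$, we have $\xi_2\cdot H_2^3=2$; substituting $\xi_2=\tfrac12(-K_X-e_2H_2)$ and using (I) rewrites this as $(-K_X)\cdot H_2^3=4$, an equation free of $\xi_2$. (U) Since $\{\xi_1,H_1\}$ and $\{\xi_2,H_2\}$ are both $\Z$-bases of $\Pic(X)$, the base-change determinant is $\pm1$; a short computation shows it equals $\tfrac12(ae_1-2b)$ (the unknown $e_2$ cancels), and combining this with $H_2\cdot F_2=0$, $-K_X\cdot F_2=2$ and the positivity above forces the sign, giving $ae_1-2b=2$.

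Finally I combine the three constraints, splitting on the normalization of $\cE_1$. For $c_1=0$ (so $e_1=3$), (I) reads $4ab(2b^2-a^2c_2)=0$, so $c_2=2b^2/a^2$; feeding this into (II) gives $b^2(3a-2b)=1$, which clashes with (U)'s $3a-2b=2$. For $c_1=-1$ (so $e_1=4$), (U) gives $b=2a-1$, and eliminating $c_2$ between (I) and (II) yields $27a^3-54a^2+30a-4=(3a-2)(9a^2-12a+2)=0$, whose only rational root is $a=2/3$, not a positive integer. In either case the system is inconsistent, so no second $\P^1$-bundle over $Q^3$ exists. I expect the main obstacle to be the bookkeeping of the $Q^3$-intersection numbers (the recurring factors of $2$ from $h^2=2\ell$, $h^3=2$ that distinguish this from the $\P^3$ computation of Proposition~\ref{prop:P1-bundle/P3}) and the verification that the final cubic genuinely has no positive integer root; by contrast the structural inputs (Brauer triviality, $\rho_X=2$, simpliciality) are immediate from the results already recalled.
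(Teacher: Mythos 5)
Your argument is correct (I have checked the intersection numbers $\sigma_k$ on $Q^3$, the determinant identity $\tfrac12(ae_1-2b)=\pm1$ with the sign forced to give $ae_1-2b=2$, and the final polynomials: in the case $c_1=0$ one indeed gets $b^2(3a-2b)=1$ against $3a-2b=2$, and in the case $c_1=-1$ the elimination with $b=2a-1$ does produce $27a^3-54a^2+30a-4=(3a-2)(9a^2-12a+2)$, which has no positive integer root). Your route is genuinely different from the paper's in one structural respect: the paper first invokes Proposition~\ref{prop:bir:sec:contr} to identify $f_2$ as the contraction of minimal birational sections of $f_1$ over lines, which yields $f_1^{\ast}H_1\cdot F_2=1$ and, via parity of $-K_X\cdot F_2=2$, immediately pins down $c_1=-1$, $\xi_1\cdot F_2=-1$ and then $c_2=1$; the contradiction is reached by computing $H_2^3\cdot f_1^{\ast}H_1$ two ways ($2$ versus $1$) after writing $\xi_1=-\xi_2$, $f_1^{\ast}H_1=\xi_2+f_2^{\ast}H_2$. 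You never determine $c_1,c_2$ individually; instead you keep them as unknowns and close the system with the identity $\xi_2\cdot H_2^3=2$ (equivalently $(-K_X)\cdot H_2^3=4$), which the paper does not use. What your approach buys is independence from the minimal-birational-section machinery (only Brauer triviality, $\rho_X=2$, nefness of $H_2$ and unimodularity of the basis change are needed); what the paper's buys is a shorter computation, since $a=b$ and $c_2=1$ collapse the Diophantine system early. One cosmetic point: in the case $c_1=0$ you divide $4ab(2b^2-a^2c_2)=0$ by $b$ without remark; you should note that $b=0$ is already excluded by $3a-2b=2$, after which the step is fine.
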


\if0
Before starting the proof of the above proposition, we recall the structure of the Chow ring of $Q^3$. We denote a quadric surface and a line on $Q^3 $by $H$ and $L$ respectively. Then we see that $A_1(Q^3)=\Z[H]$ and $A_2(Q^3)=\Z[L]$. We  also see that 
\fi

\begin{proof} 
Let $X$ be a smooth Fano $4$-fold with nef tangent bundle. Assume that $f_1: X \to X_1:=Q^3$ is a $\P^1$-bundle. To prove Proposition~\ref{prop:P1bundle:Q3}, assume the contrary; then we have another $\P^1$-bundle structure $f_2: X\to X_2:=Q^3$ besides $f_1$. By Corollary~\ref{cor:Brauer}, $f_i: X \to X_i$ ($i=1,2$) is given by the projectivization of a rank $2$ vector bundle $\cE_i$ on $Q^3$. Denoting by $\xi_i$ the divisor corresponding to the tautological line bundle $\cO_{\P(\cE_i)}(1)$ and by $H_i$ the ample generator of ${\rm Pic}(X_i)$, we have 
$${\rm Pic}(X)\cong \Z[\xi_i]\oplus\Z[f_i^{\ast}H_i]\,\,\,\,~\mbox{for~}i=1,\,2.$$
Remark that the Chow groups $A^1(Q^3)$ and $A^2(Q^3)$ are isomorphic to $\Z$; then we consider the Chern classes $c_1(\cE_1), c_1(\cE_2), c_2(\cE_1)$ and $c_2(\cE_2)$ as integers $c_1, c_1', c_2$ and $c_2'$ respectively: 
$$c_1(\cE_1)=c_1H_1,\,\, c_1(\cE_2)=c_1'H_2,\,\, c_2(\cE_1)=\dfrac{c_2}{2}H_1^2,\,\, c_2(\cE_2)=\dfrac{c_2'}{2}H_2^2.$$ 
We may assume that $\cE_i$'s are normalized, that is, $c_1, c_1'\in \{-1, 0\}$. 
We denote by $F_i$ a fiber of $f_i$. 

By Proposition~\ref{prop:bir:sec:contr}, $f_2$ is nothing but the elementary contraction which contracts minimal birational sections of $f_1$ over lines on $Q^3$; this implies that  $$f_1^{\ast}H_1\cdot F_2=1.$$ Then we obtain
$$
2=-K_X\cdot F_2=2\xi_1\cdot F_2+(3-c_1).
$$ 
Comparing the parity of both sides of this equation, we see that $c_1=-1$; thus we obtain $$\xi_1\cdot F_2=-1. $$
Since we have $\xi_1^2+\xi_1\cdot f_1^{\ast}H_1+\dfrac{1}{2}c_2f_1^{\ast}H_1^2=0$, $f_1^{\ast}H_1^4=0$ and $\xi_1\cdot f_1^{\ast}H_1^3=2$, we also have 
$$f_1^{\ast}H_1^4=0,\,\, \xi_1\cdot f_1^{\ast}H_1^3=2,\,\,\xi_1^2\cdot f_1^{\ast}H_1^2=-2,\,\,\xi_1^3\cdot f_1^{\ast}H_1=2-c_2,\,\,\xi_1^4=2c_2-2.$$

For the ample generator $H_2$ of $ {\rm Pic}(X_2)$, there exist integers $a, b$ such that $\cO_X(f_2^{\ast}H_2)\cong \cO_X(a\xi_1+bf_1^{\ast}H_1)\in {\rm Pic}(X)$. By this definition, $(a, b)\neq (0,0)$. Moreover we have 
$$
0=(a\xi_1+bf_1^{\ast}H_1)\cdot F_2=-a+b.
$$
Hence, we obtain $a=b\neq 0$; then we obtain 
$$
0=(\xi_1+f_1^{\ast}H_1)^4=-2c_2+2.
$$
Hence we obtain $c_2=1$. Let us take integers $\alpha, \beta, \gamma, \delta$ as follows:
  \begin{equation}
  \begin{cases}
       \xi_1 &= \alpha \xi_2  + \beta f_2^{\ast}H_2 \\
        f_1^{\ast}H_1&= \gamma \xi_2 +  \delta f_2^{\ast}H_2 
   \end{cases}
  \end{equation}
 Remark that $|\alpha\delta-\beta\gamma|=1$, because $\{\xi_i, f_i^{\ast}H_i\}$ is a $\Z$-basis of ${\rm Pic}(X)$ for $i=1, 2$. 
Since we have 
$$
-1=\xi_1\cdot F_2=\alpha,\quad 1=\xi_1\cdot F_1=-\xi_2\cdot F_1+\beta f_2^{\ast}H_2\cdot F_1
$$
and 
$$
1=f_1^{\ast}H_1\cdot F_2=\gamma,\quad 0  =f_1^{\ast}H_1\cdot F_1   =\xi_2\cdot F_1+\delta  f_2^{\ast}H_2\cdot F_1,
$$
we obtain
  \begin{equation}\label{eq:xiH}
  \begin{cases}
       \xi_1 &= - \xi_2  + \dfrac{1+\xi_2\cdot F_1}{f_2^{\ast}H_2\cdot F_1}f_2^{\ast}H_2 \\
        f_1^{\ast}H_1&= \xi_2 -  \dfrac{\xi_2\cdot F_1}{f_2^{\ast}H_2\cdot F_1}f_2^{\ast}H_2
   \end{cases}
  \end{equation}
 Then the equality $|\alpha\delta-\beta\gamma|=1$ implies that $f_2^{\ast}H_2\cdot F_1=1$. Computing $-K_X\cdot F_1$,  we see that $c_1'=-1$ and $\xi_2\cdot F_1=-1$. Thus the equation (\ref{eq:xiH}) can be written as follows:
   \begin{equation}\label{eq:xiH2}
  \begin{cases}
       \xi_1 &= - \xi_2   \\
        f_1^{\ast}H_1&= \xi_2 +f_2^{\ast}H_2
\end{cases}
\end{equation}

Now we have
\begin{eqnarray}
2=H_2^3&=&f_1^{\ast}H_1\cdot f_2^{\ast}H_2^3\nonumber \\
&=&   f_1^{\ast}H_1\cdot \left(f_1^{\ast}H_1+\xi_1\right)^3 \nonumber \\
&=&f_1^{\ast}H_1^4+3f_1^{\ast}H_1^3\xi_1+3f_1^{\ast}H_1^2\xi_1^2+f_1^{\ast}H_1\xi_1^3=1  \nonumber 
\end{eqnarray}
This is a contradiction.  
\end{proof}

\section{Proof of the main theorem} We prove Theorem~\ref{them:main}. Let $X$ be a smooth Fano $4$-fold defined over an algebraically closed field $k$. Assume that the tangent bundle $T_X$ is nef and the Picard number of $X$ is greater than one. By Theorem~\ref{them:KW21:propert}, we may find a smooth elementary contraction $f: X\to Y$. We denote any fiber of $f$ by $F$; then by Theorem~\ref{them:KW21:propert} again, $F$ and $Y$ are smooth Fano varieties with nef tangent bundle, and we also have $\rho_X=\rho_F+\rho_Y$, $\dim F+\dim Y=4$ and $\dim F, \dim Y >0$. If $X$ admits a contraction onto an FT-manifold $W$, then it follows from Corollary~\ref{cor:FT} that $X$ is isomorphic to $Z\times W$ for some variety $Z$; then $Z$ is one of varieties in Theorem~\ref{them:3-fold}. Thus our assertion holds in this case. By Theorem~\ref{them:3-fold}, we may assume that $Y$ is isomorphic to $\P^2, \P^3$ or $Q^3$. Then by Corollary~\ref{cor:FT}, Proposition~\ref{prop:P2-bundle/P2}, Proposition~\ref{prop:P1-bundle/P3} and Proposition~\ref{prop:P1bundle:Q3}, we may conclude our assertion.

\bibliographystyle{plain}
\bibliography{biblio}

\begin{thebibliography}{10}

\bibitem{BDSP13}
Indranil Biswas and Jo\~{a}o~Pedro Dos~Santos.
\newblock Triviality criteria for bundles over rationally connected varieties.
\newblock {\em J. Ramanujan Math. Soc.}, 28(4):423--442, 2013.

\bibitem{BCD}
Laurent Bonavero, Cinzia Casagrande, and St\'{e}phane Druel.
\newblock On covering and quasi-unsplit families of curves.
\newblock {\em J. Eur. Math. Soc. (JEMS)}, 9(1):45--57, 2007.

\bibitem{Cam92}
Fr{\'e}d{\'e}ric Campana.
\newblock Connexit\'e rationnelle des vari\'et\'es de {F}ano.
\newblock {\em Ann. Sci. \'Ecole Norm. Sup. (4)}, 25(5):539--545, 1992.

\bibitem{CP91}
Fr{\'e}d{\'e}ric Campana and Thomas Peternell.
\newblock Projective manifolds whose tangent bundles are numerically effective.
\newblock {\em Math. Ann.}, 289(1):169--187, 1991.

\bibitem{CP93}
Fr{\'e}d{\'e}ric Campana and Thomas Peternell.
\newblock {$4$}-folds with numerically effective tangent bundles and second
  {B}etti numbers greater than one.
\newblock {\em Manuscripta Math.}, 79(3-4):225--238, 1993.

\bibitem{CS21}
Jean-Louis Colliot-Th\'{e}l\`ene and Alexei~N. Skorobogatov.
\newblock {\em The {B}rauer-{G}rothendieck group}, volume~71 of {\em Ergebnisse
  der Mathematik und ihrer Grenzgebiete. 3. Folge. A Series of Modern Surveys
  in Mathematics [Results in Mathematics and Related Areas. 3rd Series. A
  Series of Modern Surveys in Mathematics]}.
\newblock Springer, Cham, [2021] \copyright 2021.

\bibitem{DJS03}
A.~J. de~Jong and J.~Starr.
\newblock Every rationally connected variety over the function field of a curve
  has a rational point.
\newblock {\em Amer. J. Math.}, 125(3):567--580, 2003.

\bibitem{DPS94}
Jean-Pierre Demailly, Thomas Peternell, and Michael Schneider.
\newblock Compact complex manifolds with numerically effective tangent bundles.
\newblock {\em J. Algebraic Geom.}, 3(2):295--345, 1994.

\bibitem{EH16}
David Eisenbud and Joe Harris.
\newblock {\em 3264 and all that---a second course in algebraic geometry}.
\newblock Cambridge University Press, Cambridge, 2016.

\bibitem{Har}
Robin Hartshorne.
\newblock {\em Algebraic geometry}.
\newblock Springer-Verlag, New York-Heidelberg, 1977.
\newblock Graduate Texts in Mathematics, No. 52.

\bibitem{Hart78}
Robin Hartshorne.
\newblock Stable vector bundles of rank {$2$} on {${\bf P}^{3}$}.
\newblock {\em Math. Ann.}, 238(3):229--280, 1978.

\bibitem{MurSE}
Takumi~Murayama (https://math.stackexchange.com/users/116766/takumi murayama).
\newblock Geometric fibers {${\mathbb P}^n$} + vanishing of brauer group
  implies projective bundle.
\newblock Mathematics Stack Exchange.
\newblock URL:https://math.stackexchange.com/q/2225438 (version: 2017-04-10).

\bibitem{Hwa06}
Jun-Muk Hwang.
\newblock Rigidity of rational homogeneous spaces.
\newblock In {\em International {C}ongress of {M}athematicians. {V}ol. {II}},
  pages 613--626. Eur. Math. Soc., Z\"urich, 2006.

\bibitem{Jos21}
Kirti Joshi.
\newblock On varieties with trivial tangent bundle in characteristic {$p>0$}.
\newblock {\em Nagoya Math. J.}, 242:35--51, 2021.

\bibitem{Kan16}
Akihiro Kanemitsu.
\newblock Fano {$n$}-folds with nef tangent bundle and {P}icard number greater
  than {$n-5$}.
\newblock {\em Math. Z.}, 284(1-2):195--208, 2016.

\bibitem{Kan17}
Akihiro Kanemitsu.
\newblock Fano 5-folds with nef tangent bundles.
\newblock {\em Math. Res. Lett.}, 24(5):1453--1475, 2017.

\bibitem{Kan19}
Akihiro Kanemitsu.
\newblock Extremal rays and nefness of tangent bundles.
\newblock {\em Michigan Math. J.}, 68(2):301--322, 2019.

\bibitem{KW21}
Akihiro Kanemitsu and Kiwamu Watanabe.
\newblock Projective varieties with nef tangent bundle in positive
  characteristic.
\newblock Preprint arXiv:{\tt 2012.09419}, 2020.

\bibitem{Kb}
J{\'a}nos Koll{\'a}r.
\newblock {\em Rational curves on algebraic varieties}, volume~32 of {\em
  Ergebnisse der Mathematik und ihrer Grenzgebiete. 3. Folge. A Series of
  Modern Surveys in Mathematics [Results in Mathematics and Related Areas. 3rd
  Series. A Series of Modern Surveys in Mathematics]}.
\newblock Springer-Verlag, Berlin, 1996.

\bibitem{KMM92}
J{\'a}nos Koll{\'a}r, Yoichi Miyaoka, and Shigefumi Mori.
\newblock Rational connectedness and boundedness of {F}ano manifolds.
\newblock {\em J. Differential Geom.}, 36(3):765--779, 1992.

\bibitem{KM}
J{\'a}nos Koll{\'a}r and Shigefumi Mori.
\newblock {\em Birational geometry of algebraic varieties}, volume 134 of {\em
  Cambridge Tracts in Mathematics}.
\newblock Cambridge University Press, Cambridge, 1998.
\newblock With the collaboration of C. H. Clemens and A. Corti, Translated from
  the 1998 Japanese original.

\bibitem{Lan15}
Adrian Langer.
\newblock Generic positivity and foliations in positive characteristic.
\newblock {\em Adv. Math.}, 277:1--23, 2015.

\bibitem{L1}
Robert Lazarsfeld.
\newblock {\em Positivity in algebraic geometry. {I}}, volume~48 of {\em
  Ergebnisse der Mathematik und ihrer Grenzgebiete. 3. Folge. A Series of
  Modern Surveys in Mathematics [Results in Mathematics and Related Areas. 3rd
  Series. A Series of Modern Surveys in Mathematics]}.
\newblock Springer-Verlag, Berlin, 2004.
\newblock Classical setting: line bundles and linear series.

\bibitem{L2}
Robert Lazarsfeld.
\newblock {\em Positivity in algebraic geometry. {II}}, volume~49 of {\em
  Ergebnisse der Mathematik und ihrer Grenzgebiete. 3. Folge. A Series of
  Modern Surveys in Mathematics [Results in Mathematics and Related Areas. 3rd
  Series. A Series of Modern Surveys in Mathematics]}.
\newblock Springer-Verlag, Berlin, 2004.
\newblock Positivity for vector bundles, and multiplier ideals.

\bibitem{MS87}
V.~B. Mehta and V.~Srinivas.
\newblock Varieties in positive characteristic with trivial tangent bundle.
\newblock {\em Compositio Math.}, 64(2):191--212, 1987.
\newblock With an appendix by Srinivas and M. V. Nori.

\bibitem{Mok02}
Ngaiming Mok.
\newblock On {F}ano manifolds with nef tangent bundles admitting 1-dimensional
  varieties of minimal rational tangents.
\newblock {\em Trans. Amer. Math. Soc.}, 354(7):2639--2658 (electronic), 2002.

\bibitem{Mori79}
Shigefumi Mori.
\newblock Projective manifolds with ample tangent bundles.
\newblock {\em Ann. of Math. (2)}, 110(3):593--606, 1979.

\bibitem{MOSW15}
Roberto Mu{\~n}oz, Gianluca Occhetta, Luis~E. Sol{\'a}~Conde, and Kiwamu
  Watanabe.
\newblock Rational curves, {D}ynkin diagrams and {F}ano manifolds with nef
  tangent bundle.
\newblock {\em Math. Ann.}, 361(3-4):583--609, 2015.

\bibitem{MOSWW}
Roberto Mu{\~n}oz, Gianluca Occhetta, Luis~E. Sol{\'a}~Conde, Kiwamu Watanabe,
  and Jaros{\l}aw~A. Wi{\'s}niewski.
\newblock A survey on the {C}ampana-{P}eternell conjecture.
\newblock {\em Rend. Istit. Mat. Univ. Trieste}, 47:127--185, 2015.

\bibitem{OSS}
Christian Okonek, Michael Schneider, and Heinz Spindler.
\newblock {\em Vector bundles on complex projective spaces}.
\newblock Progress in Mathematics, 3. Birkh\"auser, Boston, Mass., 1980.

\bibitem{Sato85}
Ei-ichi Sato.
\newblock Varieties which have two projective space bundle structures.
\newblock {\em J. Math. Kyoto Univ.}, 25(3):445--457, 1985.

\bibitem{SW04}
Luis~E. Sol{\'a}~Conde and Jaros{\l}aw~A. Wi{\'s}niewski.
\newblock On manifolds whose tangent bundle is big and 1-ample.
\newblock {\em Proc. London Math. Soc. (3)}, 89(2):273--290, 2004.

\bibitem{SW2}
Micha{\l} Szurek and Jaros{\l}aw~A. Wi{\'s}niewski.
\newblock Fano bundles over {${\mathbb P}^3$} and {${\mathbb Q}_3$}.
\newblock {\em Pacific J. Math.}, 141(1):197--208, 1990.

\bibitem{Wat14}
Kiwamu Watanabe.
\newblock Fano 5-folds with nef tangent bundles and {P}icard numbers greater
  than one.
\newblock {\em Math. Z.}, 276(1-2):39--49, 2014.

\bibitem{Wat15}
Kiwamu Watanabe.
\newblock Fano manifolds with nef tangent bundle and large {P}icard number.
\newblock {\em Proc. Japan Acad. Ser. A Math. Sci.}, 91(6):89--94, 2015.

\bibitem{Wat17}
Kiwamu Watanabe.
\newblock Low-dimensional projective manifolds with nef tangent bundle in
  positive characteristic.
\newblock {\em Comm. Algebra}, 45(9):3768--3777, 2017.

\bibitem{Wat20}
Kiwamu Watanabe.
\newblock Fano manifolds of coindex three admitting nef tangent bundle.
\newblock {\em Geom. Dedicata}, 210:165--178, 2021.

\bibitem{Wat21}
Kiwamu Watanabe.
\newblock Positivity of the second exterior power of the tangent bundles.
\newblock {\em Adv. Math.}, 385:Paper No. 107757, 27, 2021.

\bibitem{Wev81}
G.~Pete Wever.
\newblock The moduli of a class of rank {$2$} vector bundles on {$P^{3}$}.
\newblock {\em Nagoya Math. J.}, 84:9--30, 1981.

\end{thebibliography}
\end{document}